\documentclass[11pt,reqno,hidelinks]{amsart}
\usepackage{amsthm, amsmath,amsfonts,amssymb,euscript,hyperref,graphics,color}
\usepackage{graphicx}
\usepackage[square, comma, sort&compress, numbers]{natbib}
\usepackage{subfigure}
\usepackage{comment}
\usepackage{import}
\usepackage{tikz}
\usepackage{latexsym}

\usepackage{ulem}
\usepackage{todonotes}

\setlength{\oddsidemargin}{0mm} \setlength{\evensidemargin}{0mm}
\setlength{\topmargin}{-15mm}
\setlength{\textheight}{220mm}
\setlength{\textwidth}{155mm}

\makeatother
\newtheorem{theorem}{Theorem}[section]
\newtheorem{lemma}[theorem]{Lemma}

\newtheorem{proposition}[theorem]{Proposition}

\theoremstyle{definition}

\theoremstyle{remark}
\newtheorem{remark}[theorem]{Remark}

\newcommand{\nnb}{\nonumber}

\newcommand \bel {\be\label}
\newcommand \del \partial
\newcommand \be {\begin{equation}}
\newcommand \ee {\end{equation}}
\newcommand \bes {\begin{equation*}}
\newcommand \ees {\end{equation*}}
\numberwithin{equation}{section}

\def\ub{\underline{u}}
\def\Lb{\underline{L}}
\def\Cb{\underline{C}}
\def\Eb{\underline{E}}
\def\Fb{\underline{F}}

\def\Lambdab{\underline{\Lambda}}
\def\Xb{\underline{X}}

\def\yb{\underline{y}}

\def\fb{\underline{f}}

\def\TLb{\tilde{\underline{L}}}
\def\TL{\tilde{L}}

\def\p{\partial}
\newcommand{\di}{\mathrm{d}} 
\newcommand{\D}{\mathcal{D}}
\newcommand{\T}{\mathcal{T}}

\newcommand \bei  {\begin{itemize}}
\newcommand \eei {\end{itemize}}
\allowdisplaybreaks[4]
\begin{document}
\title[relativistic string with large data]{A globally smooth solution to the relativistic string equation}

\vspace{10mm}
\author[J. Wang]{Jinhua Wang}
\address{School of Mathematical Sciences, Xiamen University, Xiamen 361005, China.}\email{ wangjinhua@xmu.edu.cn}

\author[C. Wei]{Changhua Wei}
\address{Corresponding author: Department of Mathematics, Zhejiang Sci-Tech University, Hangzhou, 310018, China.}\email{changhuawei1986@gmail.com}

\date{}

\begin{abstract}
We prove the global existence of smooth solution to the relativistic string equation in a class of data that is not small. Our solution admits the feature that the right-travelling wave can be large and the left-travelling wave is sufficiently small, and vice versa. In particular, the large-size solution exists in the whole space, instead of a null strip arising from the short pulse data. This generalizes the result of Liuli-Yang-Yu (Adv. Math. 2018) to the quasilinear setting with non-small data. In addition, in our companion paper, we are able to show the global solution here can also be seen as the non-small perturbations of the plane wave solutions.
\end{abstract}

\maketitle

{\sl Key words and phrases}: relativistic string; double null condition;  large energy; global existence.

\section{Introduction}\label{sec:1}
The relativistic string, which is used to study the hadrons and mechanism of their interactions, provides a clear picture of the quark confinement. It is also closely related to the dual-resonance in hadron physics, nonlinear Born-Infeld models and cosmic strings in cosmology. One may refer to the book of Barbashov, Nesterenko and Dumbrajs \cite{BND} for a more detailed introduction to the relativistic string theory.  Due to its importance in physics, the study of the relativistic string has been paid much attention and fruitful results are obtained. Let's introduce the graphic description of the relativistic string sweeping in the Minkowski spacetime $\mathbb R^{1+2}$.

Let $(t,x,y)$ be coordinates in an $1+2$-dimensional Minkowski spacetime and a graph takes the form of
\bes
y=\phi(t,x).
\ees
We call it a relativistic string or time-like extremal surface, if $\phi$ is a critical point of the area function
\be\label{I1}
I=\iint_{\mathbb R^{1+1}}\sqrt{1+\phi^2_x-\phi^2_t}dtdx,
\ee
where $\phi_x = \p_x \phi$, $\phi_t = \p_t \phi$.
The corresponding Euler-Lagrange equation reads
\be\label{I2}
\left(\frac{\phi_t}{\sqrt{1+\phi_x^2-\phi_t^2}}\right)_t-\left(\frac{\phi_x}{\sqrt{1+\phi_x^2-\phi_t^2}}\right)_x=0.
\ee
On the other hand, \eqref{I2} is a quasilinear hyperbolic system which is linearly degenerate in the sense of P. D. Lax \cite{Lax}. Namely, if we define
\bes
u :=\phi_t,\quad v:=\phi_x,
\ees
then the relativistic string \eqref{I2} is equivalent to the following quasilinear hyperbolic system
\be\label{I3}
\begin{cases}
u_t-\frac{2uv}{1+v^2}u_x+\frac{u^2-1}{1+v^2}v_x=0,\\
v_t-u_x=0,
\end{cases}
\ee
which has the eigenvalues
\be\label{I4}
\lambda_{\pm}(u,v)=\frac{-uv\pm\sqrt{1+v^2-u^2}}{1+v^2},
\ee
that are linearly degenerate.
Besides, the relativistic string also enjoys other interesting properties, such as strict hyperbolicity, boundedness of the characteristic propagation speeds, richness, etc., see \cite{Kong-Sun-Zhou} and references therein for more details.

\subsection{Previous results}
From the perspective of hyperbolic system \eqref{I3}, there were quite a lot of results for the relativistic string, including both global existence and blow-up mechanics, based on the method of characteristics. Kong and Tsuji \cite{Kong-Tsuji} found a sufficient and a necessary conditions for the global existence of classical ($C^2$) solutions to a class of $2\times 2$ hyperbolic systems with linearly degenerate characteristics. The sufficient and necessary conditions in \cite{Kong-Tsuji} are in general not the same one, however, they consist with each other in the case of relativistic string. Therefore, based on \cite{Kong-Tsuji}, Kong, Zhang and Zhou \cite{Kong-Zhang, Kong-Zhang-Zhou} addressed a sufficient and necessary condition for the global existence of a closed relativistic string evolving in Minkowski space $\mathbb R^{1+n}$ and numerically showed the appearance of topological singularities if this condition was not satisfied. Furthermore, Nguyen and Tian \cite{Nguyen-Tian} showed that the timelike maximal cylinders in $\mathbb R^{1+2}$ always develop singularities in finite time. Related to \cite{Kong-Tsuji}, Kong, Wei and Zhang \cite{Kong-Wei-Zhang, Kong-Wei} showed a blow-up phenomenon for the one dimensional isentropic Chaplygin gas. In fact, a ``cusp'' type singularity was exhibited in \cite{Kong-Wei-Zhang}, for which the density of the Chaplygin gas itself tends to infinity. The relation between the relativistic string and Chaplygin gas was stated by Bordeman and Hoppe \cite{Hoppe}. These results verified a famous conjecture proposed by A. Majda \cite{Majda}, which says ``The Cauchy problem of the quasilniear hyperbolic system with totally linearly degenerate characteristics admits a uniquely global solution, unless the solution itself blows up in finite time''.

Alternatively, the relativistic string equation represented by \eqref{I2} is an $1+1$-dimensional quasilinear wave equation satisfying the double null condition. 
From this point of view, it was initiated with a small-data global existence given by Lindblad \cite{Lindblad}, and was later generalized in Wong \cite{Wong}. Recently, Luli, Yang and Yu \cite{Luli-Yang-Yu} introduced a new type of weighted energy estimate to prove the small-data global existence of a semilinear wave equation with null condition in the Minkowski spacetime $\mathbb R^{1+1}$, where the decay mechanism comes from the spatial separation of two families of wave packet (the right-travelling and left-travelling ones) after a sufficiently long time. It consequently provides a new way to see the one-dimensional hyperbolic system, and was extended by Zha \cite{Zha,Zha1} to quasilinear wave equations and one-dimensional quasilinear hyperbolic system.  Later, Abbrescia and Wong \cite{Wong1} employed a geometric approach to consider a class of $1+1$-dimensional variational quasilinear wave equations (including the relativistic string equation) satisfying the null condition of Klainerman, and got a globally $C^2$ solution with data that are not necessarily small. Specifically, Abbrescia and Wong \cite{Wong1} achieved the globally nonlinear stability of a class of simple wave solutions under small perturbations. We remark that different from \cite{Luli-Yang-Yu}, Abbrescia and Wong \cite{Wong1} introduced a dynamical double null coordinate system, with which they turned the quasilinear equations into a semilinear system, and solved the perturbed system directly. Eventually, his argument was closed by showing that the dynamical coordinates are $C^1$ regular in the entire spacetime. These ideas inspired by works on formation of shock in fluid somehow happen to coincide with the ones in characteristic method \cite{Kong-Tsuji}.

For relativistic string on non-flat spacetimes, there are, for instance, the global result on the Schwarzschild background \cite{He-Huang-Kong}, and the global stability of travelling wave solutions in the de sitter spacetime proved by He, Huang and Wei \cite{He-Huang-Wei}. Besides, Liu and Zhou \cite{Liu-Zhou} concerned the initial-boundary value problems of relativistic string.

In this paper, we take the method of weighted energy estimates inspired by \cite{Luli-Yang-Yu} and \cite{Wang-Wei} to study the globally smooth solutions to the relativistic string equation \eqref{I2} with non-small data. We emphasis that our method is robust. In our companion paper \cite{Wang-Wei-string-2}, we show the global stability of the plane wave solutions to \eqref{I2} within a class of non-small perturbations, for which the argument presented here plays a crucial role. Besides, we reformulate the relativistic string equation in \cite{Wang-Wei-string-2} based on another interesting work of Abbrescia and Wong \cite{Abbrescia-Wong}.

\subsection{Main theorem}
We consider the Cauchy problem of the relativistic string equation \eqref{I2} with initial data given by
\be\label{I7}
(\phi, \, \phi_t)|_{t=0}=(F(x), \, G(x)),
\ee
where $F(x)$ and $G(x)$ are smoothly real-valued functions.

\begin{theorem}\label{main-theorem}
Fix $0<\gamma<1$ and a constant $I$ that can be arbitrarily large. Suppose there exist smooth functions $f(x), \, \fb(x) \in C^\infty(\mathbb R)$ satisfying
\bes
\int_{\mathbb R}(1+|x|)^{2+2\gamma} \left( |f^{(k)}(x)|^2 + |\fb^{(k)}(x)|^2 \right) dx \leq I, \quad \text{for all} \,\, k \in \mathbb{Z}_{\geq 0},
\ees
such that our data $(F(x), G(x))$ obey
\bes
G(x)+F^\prime(x)   = \delta f(x), \quad  G(x) - F^\prime(x) = \fb(x),
\ees
where $\delta$ is a constant. Then if $\delta$ is small enough (depending on the data $I$), the Cauchy problem \eqref{I2}, \eqref{I7} admits a unique and globally smooth solution in $C^{\infty}(\mathbb R^{+}\times \mathbb R)$. Here we denote $f^{(k)} (x) := \frac{\di^{k}}{\di x^{k}} f(x)$, and similar for $\fb^{(k)}(x)$.
\end{theorem}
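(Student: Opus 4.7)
The plan is to exploit the double null structure of \eqref{I2} in null coordinates and to close a two-scale weighted energy bootstrap, tracking the right-travelling component at size $O(1)$ and the left-travelling component at size $O(\delta)$. The inspiration is the spatial separation mechanism of Luli--Yang--Yu, now adapted to the quasilinear setting with large data. Introduce $L = \p_t + \p_x$, $\Lb = \p_t - \p_x$, null coordinates $u = t-x$, $\ub = t+x$, and set $\psi := L\phi$ and $\underline{\psi} := \Lb\phi$, so that the data in Theorem \ref{main-theorem} read $\psi|_{t=0} = \delta f$ (small) and $\underline{\psi}|_{t=0} = \fb$ (large but weighted). Expanding \eqref{I2} and using $L\Lb\phi = \Box\phi$, the equation takes the schematic form
\be
L\underline{\psi} \,=\, \Lb\psi \,=\, \mathcal{N}(\p\phi)\cdot \psi \cdot \underline{\psi},
\ee
with $\mathcal{N}$ smooth. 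The decisive algebraic fact, reflecting the variational origin of \eqref{I2} and the linear degeneracy of \eqref{I3}, is that every quadratic nonlinearity is a cross product $\psi\cdot\underline{\psi}$: there are no self-interactions $\psi^2$ or $\underline{\psi}^2$.

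On the null hypersurfaces $\{u=\text{const}\}$ and $\{\ub=\text{const}\}$, define for each $k\geq 0$
\be
E_k[\psi](u) \,=\, \int (1+|\ub|)^{2+2\gamma}\,|L^k\psi|^2\,d\ub, \qquad \Eb_k[\underline{\psi}](\ub) \,=\, \int (1+|u|)^{2+2\gamma}\,|\Lb^k\underline{\psi}|^2\,du,
\ee
and take as bootstrap ansatz on $[0,T^*)$ that $\sum_{k\leq N}\sup_u E_k[\psi]\leq A\delta^2$ and $\sum_{k\leq N}\sup_{\ub}\Eb_k[\underline{\psi}]\leq AI$, for $A$ large and $N$ large but fixed. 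Since $[L,\Lb]=0$, commuting $L$ or $\Lb$ with the equation is straightforward and preserves the cross-product structure at the principal level. Pointwise bounds follow from Sobolev embedding along null rays; in particular one gains $(1+|u|)^{1+\gamma}|\underline{\psi}|\lesssim I^{1/2}$ and $(1+|\ub|)^{1+\gamma}|\psi|\lesssim \delta$.

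Multiplying each commuted equation by the appropriate weighted density and integrating over a characteristic rectangle yields energy identities in which the principal nonlinear contribution is of the schematic form
\be
\iint (1+|\ub|)^{2+2\gamma}\,|\underline{\psi}|\cdot |L^k\psi|^2\,du\,d\ub.
\ee
The pointwise bound $|\underline{\psi}|\lesssim I^{1/2}(1+|u|)^{-1-\gamma}$ makes the $u$-integral absolutely convergent because $\gamma>0$, so the right-hand side is bounded by $I^{1/2}\int E_k[\psi](u')(1+|u'|)^{-1-\gamma}du'$, and Gronwall closes the estimate for $\psi$ with a net factor of $I^{1/2}$. Symmetrically, the source driving $\underline{\psi}$ carries a factor of $\psi$ that contributes a small $\delta$, so the $\Eb_k$ estimate grows only by a factor $1+O(\delta I^{1/2})$. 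Choosing $\delta$ sufficiently small depending on $I, A, N, \gamma$ strictly improves all bootstrap constants.

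The main obstacle is at top order: when $N$ null derivatives land on the quasilinear coefficient $\mathcal{N}(\p\phi)$ rather than on $\psi\underline{\psi}$, the cross-factorized form is no longer manifest and one faces an apparent loss of derivative together with a loss of the $\delta$-smallness. The remedy is to Taylor-expand $\mathcal{N}$ around the background and observe that every term in the expansion still contains at least one factor of $\psi$ and one factor of $\underline{\psi}$ (up to total null derivatives), so the loss of one derivative is always compensated by the gain of one weighted $L^\infty$ factor; this uses the precise form of the area Lagrangian in \eqref{I1}. A secondary point is that the null frame used here is Minkowskian rather than dynamical in the sense of \cite{Wong1}, so one must verify $\|\phi_t,\phi_x\|_{L^\infty}\ll 1$ to keep the eigenvalues $\lambda_\pm$ in \eqref{I4} uniformly away from degeneration; this is a direct consequence of Sobolev embedding from the controlled weighted energies. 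Once $\sum_{k\leq N}(\sup_u E_k+\sup_{\ub}\Eb_k)$ is uniformly bounded on $[0,T^*)$ for every $N$, the standard continuation criterion for quasilinear wave equations promotes the local solution to a global smooth one, yielding $\phi\in C^\infty(\mathbb{R}^+\times\mathbb{R})$ as claimed.
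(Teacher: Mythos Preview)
Your two-scale bootstrap with the Luli--Yang--Yu weights is the right skeleton, but the sketch rests on an incorrect reduction of the equation. In null coordinates \eqref{I2} reads
\[
2(2-\psi\underline{\psi})\,L\Lb\phi \;=\; -\,\underline{\psi}^{2}\,L^{2}\phi \;-\; \psi^{2}\,\Lb^{2}\phi,
\]
so the source for $\Lb\psi$ is $\underline{\psi}^{2}L\psi+\psi^{2}\Lb\underline{\psi}$, not a zeroth-order product $\mathcal{N}\cdot\psi\cdot\underline{\psi}$. The coefficient of the top-order term $L^{2}\phi$ is $\underline{\psi}^{2}$, which is \emph{large}; this is the genuine quasilinear obstruction, and your claim that ``there are no self-interactions $\underline{\psi}^{2}$'' is false at the level of principal symbols. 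After commuting with $L^{k}$ the source contains $\underline{\psi}^{2}L^{k+1}\psi$; integrating by parts throws an $L$ onto $\underline{\psi}^{2}$, producing $L\underline{\psi}=L\Lb\phi$, which reinvokes the full equation---your ``top-order obstacle'' paragraph does not carry this out. Relatedly, your assertion that $\|\phi_{t},\phi_{x}\|_{L^{\infty}}\ll1$ follows from the bootstrap is false: $\phi_{t}=\tfrac12(\psi+\underline{\psi})$ and $\phi_{x}=\tfrac12(\psi-\underline{\psi})$ are of size $|\underline{\psi}|\sim I^{1/2}$. What the bootstrap yields is only $|\psi\underline{\psi}|\lesssim\delta I^{1/2}\ll1$, hence $g=1-\psi\underline{\psi}\sim1$; this keeps the system strictly hyperbolic, but since $g(\Lb,\Lb)=(\Lb\phi)^{2}>0$ the Minkowski vector $\Lb$ is \emph{spacelike} for the string metric $g$, so the naive multipliers do not produce coercive energies.

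The paper confronts exactly this point: it replaces $\Lambdab(\ub)L$ and $\Lambda(u)\Lb$ by the geometrically corrected $\TL=\Lambdab(\ub)(L+|L\phi|^{2}\Lb)$ and $\TLb=\Lambda(u)(\Lb+|\Lb\phi|^{2}L)$, which are non-spacelike for $g$, and verifies positivity of the resulting flux densities directly (Lemma~\ref{lemma-energy-formula}). The commutators are taken with all of $\partial^{k}$ (so both $L\phi_{k}$ and $\Lb\phi_{k}$ are tracked, not only $L^{k+1}\phi$ and $\Lb^{k+1}\phi$), and the higher-order equation is organized as a sum of null forms $Q(\partial^{i}\phi,\partial^{j}\phi)$ (Lemma~\ref{lemma-high-order-eq}). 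A further ingredient you do not implement is the order of closure: one first improves the large energies $\Eb,\Fb$ to $I_{N+1}^{2}+C\delta M^{4}$ using only the bootstrap, and only \emph{then} feeds this improvement (rather than the raw bound $M^{2}$) into the estimate for the small energies $E,F$; without this hierarchy the small-energy inequality would produce a factor $M^{2}$ in place of $I_{N+1}^{2}$ and the Gronwall step would not close for $\delta$ independent of $M$.
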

\begin{remark}
The data assumption in Theorem \ref{main-theorem} will ensure that during the propagations, the energy of the left-travelling wave $(\p_t + \p_x) \phi$ is small while the energy of the right-travelling wave $(\p_t - \p_x) \phi$ can be as large as $I$. Alternatively, if initially the right travelling wave is small and the right-travelling wave is large,  a similarly global result holds true as well.
\end{remark}

\begin{remark}
Associated to the data $(F(x), \, G(x))$, we define
\be\label{I8}
\Lambda_{\pm}(x)=\frac{-F^{\prime}(x)G(x)\pm\sqrt{1+(F^{\prime}(x))^2-G^2(x)}}{1+(F^{\prime}(x))^2}.
\ee
That is, $\Lambda_{\pm}(x)$ are the eigenvalues \eqref{I4} restricted on the initial surface.
The main result of \cite[Theorem 2.2]{Kong-Tsuji} indicated that the relativistic string equation \eqref{I2} admit a unique and globally $C^2$ solution, provided that the initial data satisfy the following assumptions: There exist two constants $\Lambda_{*}$ and $\Lambda^{*}$ such that for all $x\in\mathbb R$
\be\label{I9}
\Lambda_{*}\leq \Lambda_{\pm}(x)\leq \Lambda^{*}\quad\text{and}\quad \Lambda_{-}(x)<\Lambda_{+}(x)
\ee
and for every fixed $x_2\in\mathbb R$ and $x_1\in(-\infty,x_2)$, it holds that
\be\label{I10}
\Lambda_{-}(x_1)<\Lambda_{+}(x_2).
\ee

Now, under our data assumptions, 
\bes
\Lambda_{\pm}(x)
=\frac{\fb^2(x)-\delta^2f^2(x)\pm4\sqrt{1-\delta \fb(x)f(x)}}{4+\fb^2(x)+\delta^2f^2(x)-2\delta \fb(x)f(x)}.
\ees
It is easy to verify that \eqref{I9} and \eqref{I10} hold true for our data if $\delta$ is sufficiently small, which means that our data satisfy the sufficient condition of Kong and Tsuji \cite{Kong-Tsuji}.

For the relativistic string equation, \eqref{I9}-\eqref{I10} is the sufficient and necessary condition for the global existence \cite{Kong-Zhang-Zhou}.
We remark that, in the case of small data, it necessarily obeys \eqref{I9} and \eqref{I10}.


Different from \cite{Kong-Tsuji} and \cite{Wong1}, we obtain the $C^{\infty}$ solution instead of the $C^2$ one.
\end{remark}

Essentially, we extend the result of \cite{Luli-Yang-Yu} to a quasilinear wave equation in a non-small setting. Our proof takes advantage of the weighted energy estimates \cite{Luli-Yang-Yu}, geometrically adapted multipliers \cite{Wang-Wei} and hierarchy of energy estimates \cite{Christodoulou, Wang-Wei, Wang-Yu, Wang-Yu1}. The multiplier vector fields $\underline{\Lambda}(\underline{u})L$ and $\Lambda(u)\underline{L}$ used in \cite{Luli-Yang-Yu} are replaced by the following dynamical ones
\be\label{multiplier-intro}
\Lambdab(u) (L + |L\phi|^2 \Lb) \quad \text{and} \quad \Lambda(u) (\Lb + |\Lb \phi|^2 L),
\ee
where $L=\p_t + \p_x$ and $\Lb=\p_t - \p_x$. As what is illustrated in \cite{Wang-Wei}, the two vector fields in \eqref{multiplier-intro} are non-spacelike with respect to the geometry of string. In contrast to the small data setting in \cite{Luli-Yang-Yu}, a hierarchy of energy estimate is needed to close the argument, because in our energy strategy, the left-travelling wave $L\phi$ is small, while the right-travelling waves $\Lb \phi$ is non-small.

Comparing with \cite{Wang-Wei} where the globally large solution of the $1+n$-dimensional ($n=2, 3$) relativistic membrane equation with short pulse data was obtained, here our non-small data are defined on the whole space instead of the short pulse one confined in a thin domain. As a result, the large-size solution or the right-travelling wave distributes in the whole $\mathbb{R}^{1+1}$, rather than a thin null strip as in \cite{Wang-Wei}. Of course,  it holds that in \cite{Wang-Wei}, the higher order derivatives it takes, the larger energy norm it becomes. But in our present situation, each order energy (of $\delta$ size or non-small size) is consistent.

We arrange our paper as follows: In Section \ref{sec:2}, we give some preparations for the proof of the main theorem including the notations, energy scheme and the vector fields. Section \ref{sec:3} is the main body of the paper, in which we prove the main Theorem \ref{main-theorem} by energy estimates. In the last section, we describe the initial data that satisfy the smallness requirement in the energy estimates.

\section{Preliminaries}\label{sec:2}
In this section, we give some preliminaries for proving the main theorem, including the geometry of the relativistic string, energy scheme and vector fields.
\subsection{Reformulation of the relativistic string equation geometrically}
Let $\eta_{\mu \nu}$ be the standard Minkowski metric. We recall the null foliations in the Minkowski spacetime. The null coordinates are
\begin{equation*}
\ub=\frac{t+x}{2},\quad u =\frac{t-x}{2},
\end{equation*}
Their corresponding null vectors are defined by
\begin{equation*}
L=\partial_{\underline{u}}=\partial_{t}+\partial_{x},\quad \underline{L}=\partial_{u}=\partial_{t}-\partial_{x},
\end{equation*}
which obey
\bes
\eta(L,L)=\eta(\underline{L},\underline{L})=0,\quad\eta(L,\underline{L})=-2.
\ees
It is straightforward to verify
\bes
Lu=\underline{L}\underline{u}=0,\quad L\underline{u}=\underline{L}u=1.
\ees

Based on the geometry of the relativistic string, similar to the relativistic membrane in \cite{Wang-Wei}, we can rewrite the relativistic string equation as
\begin{equation}\label{2.7}
\Box_g \phi = \frac{1}{\sqrt{g}}\partial_{\alpha}\left(\sqrt{g}g^{\alpha\beta}\partial_{\beta}\phi\right)=0.
\end{equation}
Combining with the following gauge condition
\begin{equation}\label{wave-coordinate}
\partial_{\alpha}\left(\sqrt{g}g^{\alpha\beta}\right)=0,\quad \alpha,\,\beta=0,1,
\end{equation}
we obtain
\begin{equation}\label{2.8}
g^{\alpha\beta}\partial_{\alpha}\partial_{\beta}\phi=0,
\end{equation}
where the associated metric takes the following form
\begin{equation}\label{2.9}
g_{\alpha\beta}=\eta_{\alpha\beta}+\partial_{\alpha}\phi\partial_{\beta}\phi,\quad g^{\alpha\beta}=\eta^{\alpha\beta}-\frac{\partial^{\alpha}\phi\partial^{\beta}\phi}{g}.
\end{equation}
Note that, $g^{\alpha\beta}$ denotes the inverse of $g_{\alpha\beta}$
and the determinant of $g_{\alpha \beta}$ is given by
\begin{equation}\label{2.10}
g \doteq1+Q(\phi,\phi)=1-L\phi\Lb\phi.
\end{equation}
\begin{remark}
The relativistic string is also called a time-like extremal surface if $g>0$.
The determinant $g=1-\phi^2_t+\phi^2_x >0$ is nothing but the time-like condition.
\end{remark}

\subsection{Energy scheme}
We can define the energy momentum tensor corresponding to \eqref{2.7}
\begin{equation}\label{2.13}
\T^{\alpha}_{\beta} [\varphi] \doteq g^{\alpha \mu}\partial_{\mu}\varphi\partial_{\beta}\varphi-\frac{1}{2} g^{\mu\nu}\partial_{\mu}\varphi\partial_{\nu}\varphi \delta^{\alpha}_{\beta},
\end{equation}
where $\delta^{\alpha}_{\beta}$ denotes the Dirac function.

For any vector field $\xi$, the associated current $P^{\alpha}$ is defined as
\begin{equation}\label{2.14}
P^{\alpha}= P^{\alpha}[\varphi,\xi] \doteq \T^{\alpha}_{\beta} [\varphi] \cdot \xi^{\beta}.
\end{equation}
The divergence of $P^{\alpha}$ could be calculated as follow, see \cite{Wang-Wei} for the detailed proof.
\begin{lemma}\label{lem:2.4}
The energy current $P^{\alpha}$ satisfies
\begin{equation}\label{eq-div-P}
\frac{1}{\sqrt{g}}\partial_{\alpha}(\sqrt{g}P^{\alpha})= \Box_{g(\p\varphi)}\varphi \cdot \xi \varphi + \T^{\alpha}_{\beta} [\varphi] \cdot \partial_{\alpha}\xi^{\beta}
-\frac{1}{2\sqrt{g}}\xi(\sqrt{g}g^{\gamma\rho})
\partial_{\gamma}\varphi\partial_{\rho}\varphi.
\end{equation}
\end{lemma}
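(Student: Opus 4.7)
The plan is a direct local computation expanding $\partial_\alpha(\sqrt{g}P^\alpha)$ by Leibniz rule, then reorganizing the terms to match the three pieces on the right hand side of \eqref{eq-div-P}. Concretely, I would start from
\bes
\sqrt{g}\,P^\alpha = \sqrt{g}\,g^{\alpha\mu}\partial_\mu\varphi\,\xi^\beta\partial_\beta\varphi - \tfrac{1}{2}\sqrt{g}\,g^{\mu\nu}\partial_\mu\varphi\,\partial_\nu\varphi\,\xi^\alpha,
\ees
apply $\partial_\alpha$ to each factor, and identify which terms survive.

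First, differentiating $\sqrt{g}\,g^{\alpha\mu}\partial_\mu\varphi$ in the first piece produces the quasilinear wave operator: by definition \eqref{2.7}, it equals $\sqrt{g}\,\Box_g\varphi$. Together with the factor $\xi^\beta\partial_\beta\varphi = \xi\varphi$ this yields the leading term $\sqrt{g}\,\Box_g\varphi\cdot\xi\varphi$. Second, I would isolate the terms where the derivative $\partial_\alpha$ hits $\xi^\beta$ (first piece) or $\xi^\alpha$ (second piece); combined, these give precisely
\bes
\sqrt{g}\,g^{\alpha\mu}\partial_\mu\varphi\,\partial_\beta\varphi\,\partial_\alpha\xi^\beta - \tfrac{1}{2}\sqrt{g}\,g^{\mu\nu}\partial_\mu\varphi\,\partial_\nu\varphi\,\partial_\alpha\xi^\alpha = \sqrt{g}\,\T^\alpha_\beta[\varphi]\,\partial_\alpha\xi^\beta,
\ees
recovering the deformation-tensor contribution. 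Third, the remaining piece of the second summand where $\partial_\alpha$ falls on $\sqrt{g}\,g^{\mu\nu}$ contracts with $\xi^\alpha$ and gives $-\tfrac{1}{2}\xi(\sqrt{g}\,g^{\mu\nu})\partial_\mu\varphi\partial_\nu\varphi$, which after dividing by $\sqrt{g}$ is exactly the anomalous term in \eqref{eq-div-P}.

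What must be checked carefully, and is the main bookkeeping step, is that all second-order derivatives of $\varphi$ cancel. There are two such terms: $\sqrt{g}\,g^{\alpha\mu}\partial_\mu\varphi\,\xi^\beta\partial_\alpha\partial_\beta\varphi$ (from differentiating $\partial_\beta\varphi$ in the first piece) and $-\sqrt{g}\,g^{\mu\nu}\xi^\alpha\partial_\mu\varphi\,\partial_\alpha\partial_\nu\varphi$ (from differentiating $\partial_\mu\varphi\,\partial_\nu\varphi$ in the second piece, using symmetry $g^{\mu\nu}=g^{\nu\mu}$ to combine two equal contributions). Relabeling $\alpha\leftrightarrow\nu$ in the second and invoking symmetry of $g^{\mu\nu}$ and of mixed partials $\partial_\alpha\partial_\beta\varphi = \partial_\beta\partial_\alpha\varphi$, these two cancel exactly.

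Collecting the surviving terms and dividing by $\sqrt{g}$ then produces the three summands on the right-hand side of \eqref{eq-div-P}. The only real subtlety is this index juggling, and it hinges solely on the symmetries of $g^{\alpha\beta}$ and of second partials, so no hypothesis on $\xi$ or on the solution $\varphi$ beyond smoothness is needed. I would write out the computation in one display and simply indicate the cancellation step in words.
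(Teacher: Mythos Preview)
Your computation is correct. The paper does not give its own proof but simply refers to \cite{Wang-Wei}; the direct Leibniz-rule expansion you outline, with the cancellation of the two second-order terms via index relabeling, is exactly the standard calculation that reference would contain.
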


We also introduce the $(0,2)$-energy momentum tensor $\T_{\alpha \beta} \doteq g_{\alpha \sigma} \T^{\sigma}_{\beta}$, where $\T^{\sigma}_{\beta}$ is the $(1,1)$-energy momentum tensor defined in \eqref{2.13}, so that we can reformulate the current as
\begin{equation}\label{def-energy-xi-u-ub}
-P^{u}[\varphi,\xi] = \T[\varphi](-Du, \xi), \quad -P^{\ub}[\varphi,\xi] = \T[\varphi](-D \ub, \xi),
\end{equation}
and
\begin{equation}\label{def-energy-xi-t}
-P^{t}[\varphi,\xi] = \T[\varphi](-Dt, \xi).
\end{equation}
For any function $f$, we always denote $Df \doteq g^{\alpha\beta}\partial_{\alpha}f\partial_{\beta}$ the gradient with respect to $g_{\alpha \beta}$. We will omit the $[\varphi]$ in $\T[\varphi]_{\alpha \beta}, \, \T^\alpha_\beta [\varphi]$ if there is no confusion.

Following \cite{Luli-Yang-Yu}, we define the out-going null segment
$$C^\tau_{u_0} \doteq\Big \{(t, x) \big| \frac{t-x}{2} = u = u_0, \, 0 \leq t \leq \tau \Big\},$$
and the in-coming null segment
$$\Cb^\tau_{\ub_0} \doteq \Big\{(t, x) \big| \frac{t+x}{2} = \ub = \ub_0, \, 0 \leq t \leq \tau \Big\}.$$
The spacetime region on the right of $C^\tau_{u_0}$ is,
$$\D^{+}_{\tau, u_0} \doteq \Big\{(t, x) \big| 0 \leq t \leq \tau, \,   \frac{t-x}{2} = u \leq u_0 \Big\},$$ and the spacetime region on the left of of $\Cb^\tau_{\ub_0}$ is 
$$\D^{-}_{\tau, \ub_0} \doteq \Big\{(t, x) \big| 0 \leq t \leq \tau, \,   \frac{t+x}{2} = \ub \leq \ub_0\Big \}.$$
We also define
\bes
\D_{\tau} \doteq \{(t, x) \big| 0 \leq t \leq \tau \}.
\ees
Then $\D_\tau$ can be foliated by $\{C^\tau_u\}_{u \in \mathbb{R}}$ and $\{ \Cb^\tau_{\ub}\}_{\ub \in \mathbb{R}}$, and $\D^{-}_{\tau, \ub} \subset \D_\tau$, $\D^{+}_{\tau, u} \subset \D_\tau$.
There are corresponding spatial segments
\begin{align*}
\Sigma^{+}_{\tau, u_0} \doteq \Big\{(t, x) \big| t =\tau, \, \frac{t-x}{2} = u \leq u_0\Big \}, \\
\Sigma^{-}_{\tau, \ub_0} \doteq \Big\{(t, x) \big| t =\tau, \, \frac{t+x}{2} = \ub \leq \ub_0\Big \}.
\end{align*}
And we denote $$\Sigma_\tau \doteq \{(t, x) \big| t=\tau\}.$$ There is of course, $\Sigma^{+}_{\tau, u}  \subset \Sigma_\tau$, and  $\Sigma^{-}_{\tau, \ub}  \subset \Sigma_\tau$.

Applying the divergence theorem to these domains and noting \eqref{def-energy-xi-u-ub}-\eqref{def-energy-xi-t}, we obtain the following energy identities.

On $\D^{-}_{\tau, \ub},$ we have
\begin{equation}\label{energy-id-1}
\begin{split}
& \int_{\Sigma^{-}_{\tau, \ub}} \T(-Dt, \xi) \sqrt{g} \di x+ \int_{\Cb^\tau_{\ub}} \T(-D \ub, \xi) \sqrt{g} \di u \\
={}& \int_{\Sigma^{-}_{0, \ub}} \T(-Dt, \xi)\sqrt{g} \di x -\iint_{\D^{-}_{\tau, \ub}}\p_{\alpha}(\sqrt{g}P^{\alpha}) \di t\di x,
\end{split}
\end{equation}
and on $\D^{+}_{\tau, u}$, we have
\begin{equation}\label{energy-id-2}
\begin{split}
& \int_{\Sigma^{+}_{\tau, u}} \T(-Dt, \xi) \sqrt{g} \di x+ \int_{C^\tau_{u}} \T(-D u, \xi) \sqrt{g} \di u \\
={}& \int_{\Sigma^{+}_{0, u}} \T(-Dt, \xi)\sqrt{g} \di x -\iint_{\D^{+}_{\tau, u}}\p_{\alpha}(\sqrt{g}P^{\alpha}) \di t \di x.
\end{split}
\end{equation}

\subsection{Vector fields}
In application, we will use multiplier vector fields taking the following forms
\bel{def-Z-Zb}
Z = X+y \Lb, \quad \underline{Z} = \Xb + \yb L,
\ee
with $X, \, \Xb$ being the ones introduced in \cite{Luli-Yang-Yu},
\begin{align*}
X &= \Lambdab(\ub) L, \quad \Xb = \Lambda(u) \Lb,
\end{align*}
and $y$, $\yb$ are functions depending on the unknown which will be given later. If we define $$a(x) = (1+x^2)^{1+\gamma},$$ then the $\Lambdab(\ub), \, \Lambda (u)$ are $$\Lambdab(\ub) = a(\ub), \quad \Lambda (u) = a(u).$$

Given any vector field $\xi$, we define the deformation tensor with respect to the Minkowski metric $\eta$ by:
$\Pi^\xi_{\alpha \beta} \doteq \frac{1}{2} \mathcal{L}_\xi \eta_{\alpha \beta}$. Then  \cite{Luli-Yang-Yu}
\bes
\Pi^X_{\alpha \beta} = \frac{1}{2} \Lambdab^\prime (\ub) \eta_{\alpha \beta}, \quad \Pi^{\Xb}_{\alpha \beta} = \frac{1}{2} \Lambda^\prime (u) \eta_{\alpha \beta}.
\ees
By virtue of \eqref{eq-div-P}, we need to calculate
\begin{align*}
\T^{\alpha}_{\beta} [\varphi] \cdot \p_{\alpha} \underline{Z}^{\beta} &= \T^{\alpha}_{\beta} [\varphi] \cdot \left( \p_{\alpha} \Xb^{\beta} + \p_\alpha (\yb L)^\beta  \right) \\
& =  \T^{\alpha}_{\beta} [\varphi] \eta^{\beta \gamma} \Pi^{\Xb}_{\alpha \gamma} +  \T^{\alpha}_{\beta} [\varphi]  \p_\alpha (\yb L)^\beta \\
& =  \frac{1}{2} \Lambda^\prime (u) \T^{\alpha}_{\beta} [\varphi] \eta^{\beta \gamma} \eta_{\alpha \gamma} +  \T^{\alpha}_{\beta} [\varphi]  \p_\alpha (\yb L)^\beta.
\end{align*}
Since in $1+1$ dimension,
\bes
\T^{\alpha}_{\beta} [\varphi] \eta^{\beta \gamma} \eta_{\alpha \gamma}  = \T^{\alpha}_{\alpha} [\varphi] = 0,
\ees
the first term coming from the deformation tensor $\Pi^{\Xb}$ (or $\Pi^X$) vanishes, we therefore derive
\begin{align}
\T^{\alpha}_{\beta} [\varphi] \cdot \p_{\alpha} \underline{Z}^{\beta} & = \T^{\alpha}_{\beta} [\varphi]  \p_\alpha(\yb L)^\beta = L^\beta \T^{\alpha}_{\beta} [\varphi]  \p_\alpha \yb, \label{deformation-Zb}\\
\T^{\alpha}_{\beta} [\varphi] \cdot \p_{\alpha} Z^{\beta} & = \T^{\alpha}_{\beta} [\varphi]  \p_\alpha (y \Lb)^\beta = \Lb^\beta \T^{\alpha}_{\beta} [\varphi]  \p_\alpha y. \label{deformation-Z}
\end{align}

\section{The proof of Theorem \ref{main-theorem}}\label{sec:3}
In this section, we prove the main Theorem \ref{main-theorem}.
\subsection{Multipliers}
There are two natural vector fields $D u$ and $D \ub$
\begin{equation}\label{grad-u-ub-expansion-1}
\begin{split}
D u &\doteq g^{\mu\nu}\p_{\mu}u \p_{\nu}= -\frac{1}{2} L - \frac{L\phi \Lb\phi}{4g} L  - \frac{(L\phi)^2}{4g} \Lb, \\
D \ub &\doteq g^{\mu\nu}\p_{\mu}\ub \p_{\nu}=  -\frac{1}{2} \Lb - \frac{L\phi \Lb\phi}{4g} \Lb  - \frac{(\Lb\phi)^2}{4g} L,
\end{split}
\end{equation}
which are non-spacelike with respect to the geometry $g_{\alpha \beta}$ of the relativistic string, for there is
\begin{align*}
g(Du, Du) = g^{u u} & = -\frac{(L \phi)^2}{4g}, \\
g(D \ub, D \ub) = g^{\ub \ub}&  = -\frac{(\Lb \phi)^2}{4g}.
\end{align*}
Motivated by these vector fields, we introduce the following multipliers \cite{Wang-Wei} (after ignoring the lower order terms in \eqref{grad-u-ub-expansion-1})
\bel{def-TL-TLb}
\TL= \Lambdab(\ub) (L + |L\phi|^2 \Lb), \quad \TLb = \Lambda(u) (\Lb + |\Lb \phi|^2 L).
\ee
That is, in \eqref{def-Z-Zb}, we take
\bel{def-TL-TLb1}
y = \Lambdab(\ub) |L\phi|^2, \quad \yb = \Lambda(u) |\Lb \phi|^2.
\ee
We see that
\begin{align*}
g(\TL, \TL) & =  \Lambdab^2 (\ub) |L \phi|^2 (-3 + 2L \phi \Lb \phi+|L\phi|^2|\Lb \phi|^2), \\
 g(\TLb, \TLb) & =\Lambda^2 (u) |\Lb \phi|^2 (-3 + 2L \phi \Lb \phi+|L\phi|^2|\Lb \phi|^2).
\end{align*}
This implies that $\TL$ and $\TLb$ are non-spacelike, provided that $-3 + 2L \phi \Lb \phi+|L\phi|^2|\Lb \phi|^2<0$.
\begin{remark}
Due to the quasilinear feature of relativistic string equation, the multipliers used in \cite{Luli-Yang-Yu} are now no longer not time-like with respect to the geometry $g_{\alpha \beta}$. The multipliers \eqref{def-TL-TLb} we choose are inspired by our work on large data problem of the higher dimensional relativistic membranes \cite{Wang-Wei}. As in \cite{Luli-Yang-Yu}, here we need the extra weights $\Lambdab(\ub)$ and $\Lambda(u)$ to gain the ``decay'' properties of the solution.
\end{remark}


We calculate $\T^{\alpha}_{\beta} [\varphi] \cdot \partial_{\alpha}\TL^{\beta}$, and $ \T^{\alpha}_{\beta} [\varphi] \cdot \partial_{\alpha}\TLb^{\beta}$ which will be needed in the energy estimates later.

For convenience, we denote
\bel{def-tilde-Q}
\tilde Q (\varphi, \psi) = g^{\mu\nu}\partial_{\mu}\varphi\partial_{\nu}\psi.
\ee
Then
\bes
\tilde Q (\varphi, \varphi) = Q(\varphi, \varphi) - q(\varphi, \varphi), \quad \text{with} \quad q(\varphi, \varphi) \doteq \frac{1}{1+Q(\phi, \phi)} (\p^\mu \phi \p_\mu \varphi)^2.
\ees
We recall that $Q(\varphi, \psi) = - L\varphi \Lb \psi$.

\begin{lemma}\label{lem-deformation}
We have
\begin{align*}
\T^{\alpha}_{\beta} [\varphi]  \p_{\alpha} \TLb^{\beta}  &= \left( - \frac{1}{2} |L \varphi|^2 - \frac{1}{4 g} L \phi \Lb \phi |L \varphi|^2  - \frac{1}{4 g} |L \phi|^2 L \varphi \Lb \varphi  \right)  (\Lambda^\prime (u) |\Lb \phi|^2 + 2 \Lambda (u) \Lb^2 \phi \Lb \phi ) \\
& - \frac{1}{8 g} \left( |\Lb \phi|^2 |L \varphi|^2 -  |L \phi|^2 |\Lb \varphi|^2  \right) \cdot  2 \Lambda (u) L \Lb  \phi \Lb \phi,
\end{align*}
and
\begin{align*}
\T^{\alpha}_{\beta} [\varphi] \p_{\alpha} \TL^{\beta} &= \left( - \frac{1}{2} |\Lb \varphi|^2 - \frac{1}{4 g} L \phi \Lb \phi |\Lb \varphi|^2  - \frac{1}{4 g} |\Lb \phi|^2 L \varphi \Lb \varphi  \right)  ( \Lambdab^\prime (\ub) |L \phi|^2 + 2 \Lambdab (\ub) L^2 \phi L \phi ) \\
& + \frac{1}{8 g} \left( |\Lb \phi|^2 |L \varphi|^2 -  |L \phi|^2 |\Lb \varphi|^2  \right) \cdot  2 \Lambdab (\ub) L \Lb  \phi L \phi.
\end{align*}
\end{lemma}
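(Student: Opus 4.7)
The plan is to prove Lemma \ref{lem-deformation} by a direct computation starting from identities \eqref{deformation-Zb} and \eqref{deformation-Z}. The key structural reduction, already recorded in the text preceding the lemma, is that in $1+1$ dimensions $\T^\alpha_\alpha[\varphi]\equiv 0$, so the conformal parts of $\Pi^{\Xb}$ and $\Pi^X$ contribute nothing and the computation reduces to
\[
\T^\alpha_\beta[\varphi]\,\p_\alpha \TLb^\beta = L^\beta\T^\alpha_\beta[\varphi]\,\p_\alpha \yb,\qquad \T^\alpha_\beta[\varphi]\,\p_\alpha \TL^\beta = \Lb^\beta\T^\alpha_\beta[\varphi]\,\p_\alpha y,
\]
with $\yb = \Lambda(u)|\Lb\phi|^2$ and $y = \Lambdab(\ub)|L\phi|^2$ as in \eqref{def-TL-TLb1}. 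I will carry out the $\TLb$ case in detail; the $\TL$ formula then follows by swapping $L\leftrightarrow\Lb$, $u\leftrightarrow\ub$, $\Lambda\leftrightarrow\Lambdab$ throughout, with the sign flip in the second block emerging from the antisymmetry of $|\Lb\phi|^2|L\varphi|^2 - |L\phi|^2|\Lb\varphi|^2$ under the swap.

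Using the definition \eqref{2.13} of $\T^\alpha_\beta[\varphi]$, the template
\[
L^\beta\T^\alpha_\beta[\varphi]\,\p_\alpha f = L\varphi\cdot g^{\alpha\mu}\p_\alpha f\,\p_\mu\varphi - \tfrac{1}{2}\tilde Q(\varphi,\varphi)\,Lf
\]
holds for any scalar $f$. I would apply this with $f = \yb$, noting by Leibniz that $\p_\alpha \yb = \Lambda'(u)|\Lb\phi|^2\,\p_\alpha u + 2\Lambda(u)\Lb\phi\,\p_\alpha(\Lb\phi)$ and $Lu=0$. The $\p_\alpha u$ term produces $\Lambda'(u)|\Lb\phi|^2\cdot L\varphi\,(Du)\varphi$, where $(Du)\varphi$ is exactly the first parenthetical block of the lemma by \eqref{grad-u-ub-expansion-1}. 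The $\p_\alpha(\Lb\phi)$ term produces $2\Lambda(u)\Lb\phi\,L\varphi\,\tilde Q(\Lb\phi,\varphi)$, and the trace term produces $-\Lambda(u)\Lb\phi\,L\Lb\phi\,\tilde Q(\varphi,\varphi)$.

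The next step is to expand $\tilde Q(\Lb\phi,\varphi)$ and $\tilde Q(\varphi,\varphi)$ in the null frame using $g^{\alpha\mu} = \eta^{\alpha\mu} - \p^\alpha\phi\,\p^\mu\phi/g$ together with $\eta^{u\ub}=-\tfrac{1}{2}$, $\p^u\phi = -\tfrac{1}{2}L\phi$, $\p^{\ub}\phi = -\tfrac{1}{2}\Lb\phi$, and to verify the algebraic identity
\[
2L\varphi\,\tilde Q(\Lb\phi,\varphi) - L\Lb\phi\,\tilde Q(\varphi,\varphi) = 2\Lb^2\phi\cdot L\varphi\,(Du)\varphi - \tfrac{1}{4g}\bigl(|\Lb\phi|^2|L\varphi|^2 - |L\phi|^2|\Lb\varphi|^2\bigr)L\Lb\phi.
\]
Multiplying this identity by $\Lambda(u)\Lb\phi$ and combining with the $\Lambda'(u)|\Lb\phi|^2\cdot L\varphi\,(Du)\varphi$ piece, the common factor $\Lambda'(u)|\Lb\phi|^2 + 2\Lambda(u)\Lb^2\phi\Lb\phi$ (which equals $\Lb\yb$) pairs with the first parenthetical block, and the second block emerges from the antisymmetric combination multiplied by $2\Lambda(u)L\Lb\phi\,\Lb\phi$.

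The main obstacle is purely bookkeeping: $\tilde Q(\Lb\phi,\varphi)$ and $\tilde Q(\varphi,\varphi)L\Lb\phi$ together expand into roughly ten monomials in $\{L\phi,\Lb\phi,L\Lb\phi,\Lb^2\phi,L\varphi,\Lb\varphi\}$, and one must watch the signs from $\eta^{u\ub}=-\tfrac{1}{2}$ and the $-\p^\alpha\phi\,\p^\mu\phi/g$ correction to see that the off-diagonal monomials (in particular, the $L\varphi\Lb\varphi\,L\Lb\phi$ contribution and the $\tfrac{1}{g}L\phi\Lb\phi\,L\Lb\phi\,L\varphi\Lb\varphi$ contribution) cancel between the two pieces. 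What remains packages into the advertised antisymmetric block $|\Lb\phi|^2|L\varphi|^2 - |L\phi|^2|\Lb\varphi|^2$, which is the structurally meaningful form for the subsequent energy estimates, since this combination is precisely what one expects to control in the weighted bilinear estimates driving the nonlinear argument.
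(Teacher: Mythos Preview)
Your proposal is correct and follows essentially the same route as the paper's proof: both start from \eqref{deformation-Zb}--\eqref{deformation-Z} and expand in the null frame. The paper organizes the computation slightly more directly by writing $L^\beta\T^\alpha_\beta[\varphi]\,\p_\alpha\yb = \T^u_{\ub}[\varphi]\,\Lb\yb + \T^{\ub}_{\ub}[\varphi]\,L\yb$ in $(u,\ub)$-coordinates and then recognizing $\T^u_{\ub}[\varphi]=g^{u\alpha}\p_\alpha\varphi\,L\varphi$ as the first parenthetical block and $\T^{\ub}_{\ub}[\varphi] = \tfrac{1}{2}\bigl(g^{\ub\ub}(L\varphi)^2 - g^{uu}(\Lb\varphi)^2\bigr)$ as the antisymmetric block, which bypasses your intermediate identity involving $\tilde Q(\Lb\phi,\varphi)$.
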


\begin{proof}
We recall that $\TL = X + y \Lb, \, \TLb = \Xb + \yb L$, with $y, \, \yb$ being given by \eqref{def-TL-TLb1}. As a consequence of \eqref{deformation-Zb}-\eqref{deformation-Z}, there are
\begin{align*}
\T^{\alpha}_{\beta} [\varphi] \cdot \p_{\alpha} \TLb^{\beta} & = L^\beta \T^{\alpha}_{\beta} [\varphi]  \p_\alpha \yb, \\
\T^{\alpha}_{\beta} [\varphi] \cdot \p_{\alpha} \TL^{\beta} & = \Lb^\beta \T^{\alpha}_{\beta} [\varphi]  \p_\alpha y.
\end{align*}
A straightforward calculation shows
\begin{align*}
L^\beta \T^{\alpha}_{\beta} [\varphi]  \p_\alpha \yb & = \T_{\ub}^u  [\varphi] \cdot \Lb (\Lambda (u) |\Lb \phi|^2) + \T_{\ub}^{\ub}  [\varphi] \cdot L (\Lambda (u) |\Lb \phi|^2) \\
&= g^{u \alpha} \p_\alpha \varphi \p_{\ub} \varphi \Lb (\Lambda (u) |\Lb \phi|^2) + \left( g^{\ub \alpha} \p_\alpha \varphi \p_{\ub} \varphi - \frac{1}{2} \tilde Q(\varphi, \varphi) \right) L (\Lambda (u) |\Lb \phi|^2).
\end{align*}
Notice that
\begin{align*}
 g^{\ub \alpha} \p_\alpha \varphi \p_{\ub} \varphi & =  g^{\ub u} \p_u \varphi \p_{\ub} \varphi + g^{\ub \ub} (\p_{\ub} \varphi)^2, \\
 \tilde Q(\varphi, \varphi) & =  g^{u u} (\p_{u} \varphi)^2 + 2 g^{\ub u} \p_u \varphi \p_{\ub} \varphi + g^{\ub \ub} (\p_{\ub} \varphi)^2, \\
 g^{\ub \alpha} \p_\alpha \varphi \p_{\ub} \varphi - \frac{1}{2} \tilde Q(\varphi, \varphi) & = \frac{1}{2} (g^{\ub \ub} (\p_{\ub} \varphi)^2 - g^{u u} (\p_{u} \varphi)^2).
\end{align*}
It then follows that
\begin{align*}
L^\beta \T^{\alpha}_{\beta} [\varphi]  \p_\alpha \yb &= \left( - \frac{1}{2} |L \varphi|^2 - \frac{1}{4 g} L \phi \Lb \phi |L \varphi|^2  - \frac{1}{4 g} |L \phi|^2 L \varphi \Lb \varphi  \right)  (\Lambda^\prime (u) |\Lb \phi|^2 + 2 \Lambda (u) \Lb^2 \phi \Lb \phi ) \\
& - \frac{1}{8 g} \left( |\Lb \phi|^2 |L \varphi|^2 -  |L \phi|^2 |\Lb \varphi|^2  \right) \cdot  2 \Lambda (u) L \Lb  \phi \Lb \phi.
\end{align*}

The identity for $\T^{\alpha}_{\beta} [\varphi]  \p_{\alpha} \TLb^{\beta}$ can be derived in the same way.

\end{proof}

\subsection{Energy estimates} Now we are ready to do the energy estimates in this subsection.

\subsubsection{Bootstrap arguments.}
Let
\bel{def-phi-k}
\phi_k \doteq \partial^k\phi,
\ee
where we use $\partial^k\phi$ to denote derivatives of $|k|$ order: $\partial^k\phi =\sum_{|k_1|+|k_2| = |k|}\partial_t^{k_1}\partial_x^{k_2}\phi$.

Define the energies
\begin{subequations}
\begin{align}
E^2_{[k+1]}(u,t) & =\int_{\Sigma_{t,u}^{+}} \Lambdab(\ub)  |L \phi_{k}|^2 \sqrt{g} \di x, \label{def-E1} \\
\Eb^2_{[k+1]}(\ub, t) & =\int_{\Sigma_{t,\ub}^{-}} \Lambda (u) |\Lb \phi_{k}|^2  \sqrt{g} \di x, \label{def-Eb1}\\
F^2_{[k+1]}(u,t)&=\int_{C_{u}^{t}}\Lambdab(\ub)|L\phi_k|^2\sqrt{g}d\ub,\label{def-F}\\
\Fb^2_{[k+1]}(\ub,t)&=\int_{C_{\ub}^{t}}\Lambda(u)|\Lb\phi_k|^2\sqrt{g}du, \label{def-Fb}
\end{align}
\end{subequations}
and
\begin{subequations}
\begin{align}
E^2_{[k+1]}(t) &=  \sup_{u \in \mathbb{R}} E^2_{[k+1]}(u, t)  =\int_{\Sigma_{t}} \Lambdab(\ub)  |L \phi_{k}|^2 \sqrt{g} \di x, \label{def-E} \\
\Eb^2_{[k+1]}(t) &=  \sup_{\ub \in \mathbb{R}} \Eb^2_{[k+1]}(\ub, t) =\int_{\Sigma_{t}} \Lambda (u) |\Lb \phi_{k}|^2  \sqrt{g} \di x, \label{def-Eb}
\end{align}
\end{subequations}
These energies are essentially the principle part of the ones appearing on the left hand side of the energy identities \eqref{energy-id-1} with $\xi=\TLb$, \eqref{energy-id-2} with $\xi = \TL$, respectively. This will be shown in Lemma \ref{lemma-energy-formula}.
 We also drop the bracket in the subscript to denote the inhomogeneous energy
\begin{align}
E^2_{N+1}(t) & =\sum_{k=0}^{N} E^2_{[k+1]}(t) \label{def-E-IH}.
\end{align}
Analogous notations apply to $\Eb^2_{N+1}, \, F^2_{N+1}, \, \Fb^2_{N+1}, \cdots$ as well.
Given any $N \in \mathbb{N}$, $N \geq 2$, we assume that
\begin{align}
& \Eb^2_{N+1}(t )+ \sup_{\ub \in \mathbb{R}} \Fb^2_{N+1}(\ub, t) \leq M^2, \label{bt-Eb-Fb}\\
& E^2_{N+1}(t) + \sup_{u \in \mathbb{R}} F^2_{N+1}(u, t) \leq \delta^2 M^2, \label{bt-E-F}
\end{align}
for all $t \geq 0$. Here $M$ is a large constant to be determined later.
\begin{remark}
In contrast to \cite{Luli-Yang-Yu} where all the energies are bounded at the same level, we have a hierarchy between the $E, \, F$ and $\Eb, \, \Fb$ energies. Namely, in the continuity argument, the bounds for $E, \, F$ energies will be improved preferentially, and we will need this information to further improve the energy bounds of $\Eb, \, \Fb$. 
\end{remark}

As a consequence, we have the $L^\infty$ bound and the weighted Sobolev inequalities.
\begin{lemma}\label{lem-sobolev}
 With the bootstrap assumptions \eqref{bt-Eb-Fb} and \eqref{bt-E-F}, we have, when $k\leq N-1$,
\bel{Sobolev-L-infty}
|L\phi_k| \lesssim \frac{\delta M}{\Lambdab^{\frac{1}{2}}(\ub)}, \quad |\Lb \phi_k| \lesssim \frac{M}{\Lambda^{\frac{1}{2}}(u)},
\ee
and
\begin{align}
\Big\| \frac{\Lambdab^{\alpha}(\ub)}{\Lambda^{\beta} (u)} L \phi_k \Big\|_{L^\infty(\Sigma^+_{t, u_0})} \lesssim \Big\| \frac{\Lambdab^{\alpha}(\ub)}{\Lambda^{\beta} (u)} L \phi_k\Big\|_{L^2(\Sigma^+_{t, u_0})} + \Big\| \frac{\Lambdab^{\alpha}(\ub)}{\Lambda^{\beta} (u)} L \p_x \phi_k\Big\|_{L^2(\Sigma^+_{t, u_0})}, \label{Sobolev-L-infty-weight-L} \\
\Big\| \frac{\Lambda^{\alpha}(u)}{\Lambdab^{\beta} (\ub)}  \Lb \phi_k \Big\|_{L^\infty(\Sigma^-_{t, \ub_0})} \lesssim \Big\| \frac{\Lambda^{\alpha}(u)}{\Lambdab^{\beta} (\ub)}  \Lb \phi_k \Big\|_{L^2(\Sigma^-_{t, \ub_0})} + \Big\| \frac{\Lambda^{\alpha}(u)}{\Lambdab^{\beta} (\ub)}  \Lb \p_x \phi_k\Big\|_{L^2(\Sigma^-_{t, \ub_0})}, \label{Sobolev-L-infty-weight-Lb}
\end{align}
for all $u_0, \ub_0 \in \mathbb{R}$ and $\alpha, \, \beta \in \mathbb{R^+}$.
\end{lemma}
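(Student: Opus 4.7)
The plan is to derive both parts from the one-dimensional Sobolev embedding
\[
|h(x_0)|^2 \;\lesssim\; \|h\|_{L^2(I)}^2 + \|\p_x h\|_{L^2(I)}^2,
\]
valid on any interval $I \subset \mathbb R$ (half-line or whole line), applied to suitably weighted versions of $L\phi_k$ and $\Lb\phi_k$. Two elementary observations drive the argument. First, the weights $\Lambdab(\ub) = (1+\ub^2)^{1+\gamma}$ and $\Lambda(u) = (1+u^2)^{1+\gamma}$ have bounded logarithmic derivatives, $|\Lambdab'(\ub)|/\Lambdab(\ub) + |\Lambda'(u)|/\Lambda(u) \lesssim 1$, so differentiating any power of the weights in $x$ only produces a comparable factor (recall $\p_x \Lambdab(\ub) = \tfrac12 \Lambdab'(\ub)$ and $\p_x \Lambda(u) = -\tfrac12 \Lambda'(u)$). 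Second, $\p_x$ commutes with the null vector fields $L$ and $\Lb$, so $\p_x L\phi_k$ and $\p_x \Lb\phi_k$ may be identified, after relabelling, with $L\phi_{k+1}$ and $\Lb\phi_{k+1}$.

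For the pointwise bound \eqref{Sobolev-L-infty} I would apply the 1D Sobolev embedding to $h = \Lambdab^{1/2}(\ub) L\phi_k$ on $\Sigma_t = \mathbb R$. The zero-order piece is controlled by $E_{[k+1]}(t) \leq E_{N+1}(t) \lesssim \delta M$ via \eqref{bt-E-F}. The $\p_x$-piece splits into a weight-derivative term, which by the first observation above is again $\lesssim \|\Lambdab^{1/2} L\phi_k\|_{L^2} \lesssim \delta M$, and a genuinely higher-order term $\|\Lambdab^{1/2} L\phi_{k+1}\|_{L^2} \lesssim E_{[k+2]}(t) \leq E_{N+1}(t) \lesssim \delta M$. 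This last step requires $k+1 \leq N$, which is exactly the hypothesis $k \leq N-1$. The bound for $\Lb\phi_k$ is completely parallel after exchanging $L \leftrightarrow \Lb$, $\Lambdab \leftrightarrow \Lambda$, $\delta M \to M$, and invoking \eqref{bt-Eb-Fb} in place of \eqref{bt-E-F}.

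The weighted inequalities \eqref{Sobolev-L-infty-weight-L} and \eqref{Sobolev-L-infty-weight-Lb} are the same 1D embedding, now on the half-lines $\Sigma^+_{t,u_0}$ and $\Sigma^-_{t,\ub_0}$, applied to $(\Lambdab^\alpha/\Lambda^\beta) L\phi_k$ and its mirror. Again $\p_x$ of the weight costs only a bounded prefactor, which is already absorbed into the first $L^2$ term on the right, while $\p_x$ of $L\phi_k$ delivers precisely the second $L^2$ term; no bootstrap or restriction on $k$ is invoked here. The one mild obstacle that must be addressed is that the factor $\sqrt g$ implicit in the definitions \eqref{def-E}, \eqref{def-Eb} of the energies must be comparable to $1$, so that $\|\Lambdab^{1/2} L\phi_k\|_{L^2(\Sigma_t)}$ and $E_{[k+1]}(t)$ are interchangeable up to harmless constants. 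This is self-consistent with the bootstrap: $g = 1 - L\phi\,\Lb\phi$, and the $k=0$ case of \eqref{Sobolev-L-infty} (which follows from exactly the argument above with no circularity) gives $|L\phi|\,|\Lb\phi| \lesssim \delta M^2/(\Lambdab^{1/2}\Lambda^{1/2})$, so $g$ stays close to $1$ whenever $\delta M^2$ is small, which is the regime of interest.
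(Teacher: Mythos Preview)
Your proposal is correct and follows essentially the same route as the paper: the paper's proof simply invokes the one-dimensional Sobolev embedding $L^\infty(\mathbb R)\hookrightarrow H^1(\mathbb R)$ together with the fact that $\partial_x$ applied to any of the weights $\Lambdab$, $\Lambda$, $\Lambdab^\alpha/\Lambda^\beta$, $\Lambda^\alpha/\Lambdab^\beta$ is controlled by the weight itself, which is exactly your first observation. Your treatment is in fact more explicit than the paper's, in particular your remark on the $\sqrt g\sim 1$ issue, which the paper handles separately in the subsequent lemma rather than inside this proof.
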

\begin{proof}
As shown in \cite{Luli-Yang-Yu}, both of \eqref{Sobolev-L-infty} and \eqref{Sobolev-L-infty-weight-L}-\eqref{Sobolev-L-infty-weight-Lb} follow from the Sobolev embedding $L^\infty(\mathbb{R}) \hookrightarrow H^1(\mathbb{R})$ and the facts that
\begin{align*}
 \partial_x \Lambdab(\ub) & \lesssim \Lambdab(\ub), \qquad \qquad \,\,  \partial_x \Lambda(u) \lesssim \Lambda(u), \\
 \partial_x \left( \frac{\Lambdab^{\alpha}(\ub)}{\Lambda^{\beta} (u)} \right) & \lesssim \frac{\Lambdab^{\alpha}(\ub)}{\Lambda^{\beta} (u)}, \quad \partial_x \left( \frac{\Lambda^{\alpha}(u)}{\Lambdab^{\beta} (\ub)}  \right) \lesssim \frac{\Lambda^{\alpha}(u)}{\Lambdab^{\beta} (\ub)}.
\end{align*}
\end{proof}

Here and hereafter, we always use $A\lesssim B$ to denote $A\leq CB$ for some positive constants $C$. And use $A\sim B$ to denote $\frac{A}{C}\leq B\leq CA$.

Based on the $L^\infty$ bound \eqref{Sobolev-L-infty}, following \cite{Wang-Wei}, we find that
\begin{lemma}\label{lemma-energy-formula}
With the bootstrap assumptions \eqref{bt-Eb-Fb} and \eqref{bt-E-F}, we have $g\sim1$ and
\begin{equation}\label{eq}
\begin{split}
\T[\phi_k] (-D u, \TL) & \sim \Lambdab(\ub) (|L \phi_{k}|^2 +|L \phi|^4|\Lb \phi_{k}|^2),\\
\T[\phi_k] (-D \ub, \TL) & \sim \Lambdab(\ub) ( |\Lb \phi|^2|L\phi_{k}|^2+|L\phi|^2|\Lb \phi_{k}|^2),\\
\T[\phi_k] (-D u, \TLb)  & \sim \Lambda (u) (|\Lb \phi|^2|L\phi_{k}|^2+|L\phi|^2|\Lb \phi_{k}|^2),\\
\T[\phi_k] (-D \ub, \TLb) & \sim \Lambda (u) (|\Lb \phi_{k}|^2 +|\Lb \phi|^4|L\phi_{k}|^2),
\end{split}
\end{equation}
providing that $\delta$ is sufficiently small. Consequently,
\begin{equation}\label{eq-dt}
\begin{split}
\T[\phi_k] (-D t, \TL) & \sim \Lambdab(\ub) (|L \phi_{k}|^2 +|L \phi|^2|\Lb \phi_{k}|^2+|\Lb\phi|^2|L\phi_k|^2 +|L \phi|^4|\Lb \phi_{k}|^2),\\
\T[\phi_k] (-D t, \TLb) & \sim \Lambda (u) (|\Lb\phi_k|^2+|\Lb\phi|^2|L\phi_k|^2+|L\phi|^2|\Lb\phi_k|^2  +|\Lb \phi|^4|L\phi_{k}|^2).
\end{split}
\end{equation}
\end{lemma}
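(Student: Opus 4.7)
The plan is to verify $g \sim 1$ first, then compute each of the four primitive quadratic forms on the right-hand side of \eqref{eq} by direct expansion of the symmetric tensor form
\bes
\T[\phi_k](X, Y) = X(\phi_k)\, Y(\phi_k) - \tfrac{1}{2}\, g(X, Y)\, \tilde Q(\phi_k, \phi_k),
\ees
and finally obtain \eqref{eq-dt} from \eqref{eq} for free by linearity.

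First I would show $g \sim 1$. Applying the $L^\infty$ bound \eqref{Sobolev-L-infty} at $k = 0$ (legitimate since $N \geq 2$) together with $\Lambdab(\ub), \Lambda(u) \geq 1$ yields $|L\phi| \lesssim \delta M$ and $|\Lb \phi| \lesssim M$. Hence $|L\phi\,\Lb\phi| \lesssim \delta M^2$, and provided $\delta$ is small enough relative to $M$ to ensure $|L\phi\,\Lb\phi| \leq \tfrac{1}{2}$, the determinant $g = 1 - L\phi\,\Lb\phi$ lies in $[\tfrac{1}{2}, \tfrac{3}{2}]$, giving $g \sim 1$.

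Second, for each of the four pairs $(Y, Z) \in \{-Du, -D\ub\} \times \{\TL, \TLb\}$, I would evaluate the symmetric form above. The scalar product $g(Y, Z)$ is immediate from the identity $g(Df, W) = W(f)$ (since $(Df)^\alpha = g^{\alpha\mu}\p_\mu f$), which yields $g(-Du, \TL) = -\Lambdab(\ub)|L\phi|^2$, $g(-D\ub, \TL) = -\Lambdab(\ub)$, $g(-Du, \TLb) = -\Lambda(u)$, and $g(-D\ub, \TLb) = -\Lambda(u)|\Lb\phi|^2$. The directional derivatives $Y(\phi_k)$ and $Z(\phi_k)$ are read off from \eqref{grad-u-ub-expansion-1} and \eqref{def-TL-TLb}, and $\tilde Q(\phi_k, \phi_k)$ is computed in null coordinates as $-L\phi_k\,\Lb\phi_k$ plus corrections of size $|L\phi|^2 (\Lb\phi_k)^2 + |\Lb\phi|^2 (L\phi_k)^2 + |L\phi\,\Lb\phi|\,|L\phi_k \Lb\phi_k|$, divided by $g \sim 1$.

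Third, I would multiply out and collect. The key observation is that the $L\phi_k\,\Lb\phi_k$ cross term produced by $Y(\phi_k)\, Z(\phi_k)$ cancels exactly against the principal piece $-\tfrac{1}{2}\,g(Y, Z) \cdot (-L\phi_k\,\Lb\phi_k)$ coming from the $\tilde Q$-contribution. Once this cancellation is performed, every surviving cross monomial carries at least one extra factor of $L\phi\,\Lb\phi$, $|L\phi|^2$, or $|\Lb\phi|^2$, and can be absorbed by AM-GM (using Step 1) into some linear combination of the four diagonal squares $(L\phi_k)^2$, $|L\phi|^4 (\Lb\phi_k)^2$, $|\Lb\phi|^2 (L\phi_k)^2$, $|L\phi|^2 (\Lb\phi_k)^2$. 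This delivers both the upper and lower bounds in \eqref{eq}. The two identities in \eqref{eq-dt} are then immediate: since $t = u + \ub$, we have $Dt = Du + D\ub$, so $\T[\phi_k](-Dt, Z) = \T[\phi_k](-Du, Z) + \T[\phi_k](-D\ub, Z)$, and adding the corresponding formulas from \eqref{eq} finishes the proof. The principal obstacle is pure bookkeeping in Step 3: each of the four products expands into roughly ten monomials, and one must patiently track them both to exhibit the $L\phi_k\,\Lb\phi_k$-cancellation and to confirm that no surviving cross term escapes domination by the four asserted squares. Provided $\delta$ is chosen small in terms of $M$, this is an algebraic verification with no conceptual difficulty.
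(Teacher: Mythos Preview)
Your proposal is correct and follows essentially the same approach as the paper: compute each $\T[\phi_k](-Du,\cdot)$ and $\T[\phi_k](-D\ub,\cdot)$ explicitly in null coordinates, identify the positive diagonal squares, and absorb the surviving cross terms via Cauchy--Schwarz using the smallness of $L\phi\,\Lb\phi$. Your use of the symmetric $(0,2)$-form of $\T$ together with the identity $g(Df,W)=Wf$ to read off $g(-Du,\TL)$, etc., is a mild repackaging that slightly streamlines the bookkeeping, but the algebra, the cancellation of the bare $L\phi_k\Lb\phi_k$ cross term (relevant in the $(-D\ub,\TL)$ and $(-Du,\TLb)$ cases), the absorption step, and the deduction of \eqref{eq-dt} from $Dt=Du+D\ub$ all match the paper's argument.
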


\begin{proof}
From Lemma \ref{lem-sobolev}, we know that if $\delta$ is sufficiently small, we have
\begin{equation*}
g=1-L\phi \Lb\phi  \sim 1 \pm \delta M^2 \sim1.
\end{equation*}

For the first line of \eqref{eq}, we calculate that
\begin{align*}
&\T[\phi_k] (D u, \TL) = g^{u\gamma}\partial_{\gamma}\phi_{k}\TL \phi_{k}-\frac{1}{2}g^{\gamma\delta}\partial_{\gamma}\phi_{k}\partial_{\delta}\phi_{k}\delta^{u}_{\alpha} \TL^\alpha \\
&= \Lambdab(\ub) g^{u\gamma}\partial_{\gamma}\phi_{k} \left(  L \phi_{k}+ |L\phi|^2 \Lb \phi_k \right)
-\frac{1}{2} \Lambdab(\ub) |L\phi|^2 \left(- L \phi_k \Lb \phi_k - q(\phi_{k}, \phi_k) \right)\\
&=\Lambdab(\ub) \left(\p^u \phi_k - \frac{1}{g} \p^u \phi \p^\gamma \phi \p_\gamma \phi_k \right) \left(  L \phi_{k}+ |L\phi|^2 \Lb \phi_k \right) + \frac{1}{2} \Lambdab(\ub) |L\phi|^2 L \phi_k \Lb \phi_k  \\
&\quad + \frac{1}{2} \Lambdab(\ub) |L\phi|^2 q (\phi_{k}, \phi_{k}) \\
& = - \frac{1}{2} \Lambdab(\ub) |L \phi_k|^2 + \frac{\Lambdab(\ub)}{2 g} L \phi L \phi_k \p^\gamma \phi \p_\gamma \phi_k +  \frac{1}{2g} \Lambdab(\ub) |L \phi|^2 L \phi \p^\gamma \phi \p_\gamma \phi_k \Lb \phi_k \\
&\quad + \frac{1}{8g} \Lambdab(\ub) |L\phi|^2 \left(|L \phi \Lb \phi_{k}|^2 + |\Lb \phi L \phi_{k}|^2 + 2 L \phi \Lb \phi \Lb \phi_k L \phi_k \right) \\
& =  -\frac{1}{2}  \Lambdab(\ub) |L\phi_{k}|^2 -\frac{1}{8g} \Lambdab(\ub) |L\phi|^4 |\Lb \phi_{k}|^2  + \frac{1}{8g} \Lambdab(\ub)  |L\phi|^2 |\Lb \phi|^2 |L\phi_{k}|^2 \\
&\quad+ \frac{\Lambdab(\ub)}{2 g} L \phi L \phi_k \p^\gamma \phi \p_\gamma \phi_k.
\end{align*}
Then,
\begin{align*}
\T[\phi_k] (-D u, \TL) & = \left( \frac{1}{2}+\frac{L \phi\underline{L}\phi}{4g}\right) \Lambdab(\ub) |L \phi_{k}|^2
 + \frac{1}{8g} \Lambdab(\ub) |L \phi|^4 |\Lb\phi_{k}|^2 \\
&\quad  - \frac{1}{8g} \Lambdab(\ub)  |L \phi|^2 |\Lb \phi|^2 |L \phi_{k}|^2 + \frac{\Lambdab(\ub)}{4 g} |L \phi|^2 L \phi_k \Lb \phi_k \\
& \sim \frac{1}{2} \Lambdab(\ub) |L \phi_{k}|^2 + \frac{1}{8g} \Lambdab(\ub) |L \phi|^4 |\Lb \phi_{k}|^2 - \frac{1}{4g}\Lambdab(\ub) |L\phi|^2 \Lb \phi_{k} L\phi_{k},
\end{align*}
where we use $\left( \frac{1}{2}+\frac{L\phi \Lb \phi}{4g}\right) \Lambdab(\ub) |L \phi_{k}|^2 - \frac{1}{8g} \Lambdab(\ub)  |L\phi|^2 |\Lb \phi|^2 |L \phi_{k}|^2 \sim \frac{1}{2} \Lambdab(\ub) |L \phi_{k}|^2$, if $\delta$ is small enough.
Applying the Cauchy's inequality to the last term
$$
| \frac{1}{4g} \Lambdab(\ub) |L\phi|^2 \Lb \phi_{k} L\phi_{k} | \leq \frac{1}{16g} \Lambdab(\ub) |L \phi|^4 |\Lb \phi_{k}|^2+\frac{1}{4g} \Lambdab(\ub) |L\phi_{k}|^2.
$$
Therefore, it can be absorbed by the first two leading terms $ \frac{1}{2} \Lambdab(\ub) |L\phi_{k}|^2 + \frac{1}{8g} \Lambdab(\ub) |L\phi|^4 |\Lb\phi_{k}|^2$, so that these main terms are roughly $ \frac{1}{4} \Lambdab(\ub) |L\phi_{k}|^2 + \frac{1}{16} \Lambdab(\ub) |L\phi|^4 |\Lb\phi_{k}|^2$, which still exhibit good signs. That is,
$$
\T[\phi_k] (-D u, \TL) \sim \frac{1}{4}  \Lambdab(\ub) \left(|L\phi_{k}|^2+ \frac{1}{4} |L\phi|^4|\underline{L}\phi_{k}|^2 \right).
$$

As for the second one in \eqref{eq},
\begin{align*}
\T[\phi_k] (D \ub, \TL) &= g^{\ub \gamma}\partial_{\gamma}\phi_{k}\tilde{L}\phi_{k}-\frac{1}{2}g^{\gamma\delta}\partial_{\gamma}\phi_{k}\partial_{\delta}\phi_{k}\delta^{\ub}_{\alpha} \tilde{L}^\alpha \\
&= \Lambdab(\ub) g^{\ub \gamma}\partial_{\gamma}\phi_{k} \left(  L \phi_{k}+ |L\phi|^2 \Lb \phi_k \right)
-\frac{1}{2} \Lambdab(\ub)  \left(- L \phi_k \Lb \phi_k - q(\phi_{k}, \phi_k) \right)\\
&=\Lambdab(\ub) \left(\p^{\ub} \phi_k - \frac{1}{g} \p^{\ub} \phi \p^\gamma \phi \p_\gamma \phi_k \right) \left(  L \phi_{k}+ |L\phi|^2 \Lb \phi_k \right) + \frac{1}{2} \Lambdab(\ub) L \phi_k \Lb \phi_k  \\
&\quad + \frac{1}{2} \Lambdab(\ub) q (\phi_{k}, \phi_{k}) \\
& = - \frac{1}{2} \Lambdab(\ub) |L \phi|^2 |\Lb \phi_k|^2 + \frac{\Lambdab(\ub)}{2 g} \Lb \phi \p^\gamma \phi \p_\gamma \phi_k L \phi_k  +  \frac{1}{2g} \Lambdab(\ub)  \Lb \phi \p^\gamma \phi \p_\gamma \phi_k |L \phi|^2 \Lb \phi_k \\
&\quad + \frac{1}{8g} \Lambdab(\ub)  \left(|L \phi \Lb \phi_{k}|^2 + |\Lb \phi L \phi_{k}|^2 + 2 L \phi \Lb \phi \Lb \phi_k L \phi_k \right).
\end{align*}
It follows by straightforward calculations that
\begin{align*}
\T[\phi_k] (-D \ub, \TL) &= \left( \frac{1}{2} - \frac{1}{8g}\right) \Lambdab(\ub) |L \phi |^2 |\Lb \phi_k|^2 + \frac{1}{8g} \Lambdab(\ub) |\Lb \phi |^2 |L \phi_k|^2  + Er_3, \\
Er_3 &=   - \frac{Q(\phi, \phi_k)}{2g} L\phi \Lb \phi L\phi \Lb \phi_k.
\end{align*}
We can check that
$|Er_3 | \lesssim  \delta M^2  |L\phi|^2 |\Lb \phi_{k}|^2+\delta M^2 |\Lb \phi|^2|L\phi_{k}|^2$. Thus,
$$
\T[\phi_k]  (-D \ub, \tilde{L}) \sim \frac{1}{8} \Lambdab(\ub) (3|L \phi |^2 |\Lb \phi_k|^2 +  |\Lb \phi |^2 |L \phi_k|^2).
$$
The third and fourth lines in \eqref{eq} can be proved in an analogous way.

\end{proof}

We also need the higher order equations whose double null structure is crucial for our energy estimates.
\begin{lemma}\label{lemma-high-order-eq}
The higher order $\phi_k$ satisfies the following equation
\begin{equation}
\begin{split}
& \Box_{g(\p\phi)}\phi_k   = (1+Q)^{-1} g^{\mu \nu} \partial_{\nu}Q \partial_{\mu}\partial^k \phi \\
+ & \sum_{ \substack{|i_1|+\cdots +|i_j| \leq |k|,\\ 3 \leq j \leq 2|k|+3} } F_{i_1, \cdots, i_j}(Q)Q(\partial^{i_1} \phi, Q( \partial^{i_2}\phi, \partial^{i_3} \phi) )\\
&\quad \quad \quad \quad \quad \quad  \times Q(\partial^{i_4} \phi,\partial^{i_5} \phi) \cdots Q( \partial^{i_{j-1}}\phi, \partial^{i_j} \phi),
\end{split}
\end{equation}
where $F_{i_1, \cdots i_j}(Q)$ is a fractional function on $Q = Q(\phi,\phi)=\eta^{\alpha\beta}\p_{\alpha} \phi \p_{\beta} \phi$ and $j$ is odd, and if $j = 3$, then this is to be interpreted as the factor $Q(\partial^{i_4} \phi,\partial^{i_5} \phi)$ being absent.
\end{lemma}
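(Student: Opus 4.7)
The plan is to derive this higher-order equation by commuting $\partial^{k}$ with the wave operator, after first recasting the equation in a form that exposes its null structure. Using $g^{\alpha\beta} = \eta^{\alpha\beta} - (1+Q)^{-1}\partial^{\alpha}\phi\,\partial^{\beta}\phi$ together with the identity $2\,\partial^{\alpha}\phi\,\partial^{\beta}\phi\,\partial_{\alpha}\partial_{\beta}\phi = \partial^{\beta}\phi\,\partial_{\beta}Q = Q(\phi,Q)$, the equation $g^{\alpha\beta}\partial_{\alpha}\partial_{\beta}\phi = 0$ can be rewritten as
\[
\Box\phi \;=\; \frac{Q(\phi,Q)}{2(1+Q)},
\]
a semilinear wave equation whose nonlinearity is already a product of null forms. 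This is the form I will commute against.

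Applying $\partial^{k}$ to both sides gives $\Box\phi_{k} = \partial^{k}\bigl[Q(\phi,Q)/(2(1+Q))\bigr]$, and then the conversion
\[
\Box_{g}\phi_{k} \;=\; \Box\phi_{k} - \frac{\partial^{\alpha}\phi\,\partial^{\beta}\phi}{1+Q}\,\partial_{\alpha}\partial_{\beta}\phi_{k}
\]
places the identity in the desired form once the right-hand side is expanded by Leibniz. Two identities drive the expansion: (i) $\partial Q(\varphi,\psi) = Q(\partial\varphi,\psi) + Q(\varphi,\partial\psi)$, which distributes derivatives through null forms without destroying the null structure, so that $\partial^{n}Q(\phi,Q)$ decomposes into a sum of $Q(\partial^{i_{1}}\phi, Q(\partial^{i_{2}}\phi,\partial^{i_{3}}\phi))$ with $i_{1}+i_{2}+i_{3}=n$; and (ii) each derivative applied to $(1+Q)^{-1}$ produces a factor $-(1+Q)^{-2}\partial Q = -2(1+Q)^{-2}Q(\partial\phi,\phi)$, so by Fa\`a di Bruno $\partial^{m}(1+Q)^{-1}$ is a sum of $(1+Q)^{-r-1}\prod_{s}\partial^{l_{s}}Q$ with $\sum l_{s}=m$, each $\partial^{l_{s}}Q$ further expanded via Leibniz as a sum of null forms $Q(\partial^{a}\phi,\partial^{b}\phi)$. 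Because each derivative absorbed by $(1+Q)^{-1}$ introduces exactly two extra null-form factors, the total number $j$ of null forms in every resulting term satisfies $j = 2m+3$ and is therefore odd, with $3 \le j \le 2k+3$, and the derivative count obeys $i_{1}+\cdots+i_{j} \le k$ (strict when any derivative falls on the rational factor).

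To isolate the leading term, I identify the $\partial^{k+2}\phi$ contribution to $\partial^{k}[Q(\phi,Q)/(2(1+Q))]$: it arises only from the outer $k$ derivatives landing fully on the inner $Q$ and then on one of the two $\phi$'s inside, producing precisely $(1+Q)^{-1}\partial^{\alpha}\phi\,\partial^{\beta}\phi\,\partial_{\alpha}\partial_{\beta}\phi_{k}$, which cancels against the conversion correction. The surviving first-order-in-$\phi_{k}$ contributions then consolidate, via the identity $\partial^{\gamma}\phi\,\partial_{\gamma}\partial^{\beta}\phi = \tfrac{1}{2}\partial^{\beta}Q$ and the definition of $g^{\mu\nu}$, into the single compact leading term $(1+Q)^{-1}g^{\mu\nu}\partial_{\nu}Q\,\partial_{\mu}\partial^{k}\phi$. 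Every remaining term is manifestly of the stated product-of-null-forms shape, with exactly one nested null form $Q(\partial^{i_{1}}\phi, Q(\partial^{i_{2}}\phi,\partial^{i_{3}}\phi))$ (from the original double-null nonlinearity) followed by flat null forms (from the Fa\`a di Bruno expansion of the rational factor).

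The main obstacle is the combinatorial bookkeeping: verifying that the $\partial^{k+2}\phi$ contribution cancels exactly against the conversion correction, and checking that the surviving $\partial^{k+1}\phi$ contributions reassemble into the single compact leading term rather than a more cumbersome rearrangement. To sidestep the full multinomial expansion, I would organise the argument inductively on $k$, starting from $\Box_{g}\phi = 0$ (equivalent to the $k=0$ case with $F_{0,0,0}(Q) = -(1+Q)^{-2}$) and commuting one further $\partial$ at each step, using $\Box_{g}\phi_{k} = \partial\,\Box_{g}\phi_{k-1} - [\partial,\Box_{g}]\phi_{k-1}$ together with the two Leibniz identities above to propagate the structural form from $k-1$ to $k$.
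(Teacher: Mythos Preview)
Your approach is correct and matches what the paper indicates: the paper itself gives no detailed argument here, stating only that the lemma follows by applying $\partial^{k}$ to the string equation and referring to \cite{Wang-Wei} for details, and your plan of rewriting the equation semilinearly as $\Box\phi = Q(\phi,Q)/\bigl(2(1+Q)\bigr)$, commuting $\partial^{k}$, and then converting back to $\Box_{g}\phi_{k}$ is exactly how one executes that differentiation while keeping the double-null structure visible. Your check that the unique $\partial^{k+2}\phi$ contribution cancels against the conversion correction, and your $k=0$ base case with $F_{0,0,0}(Q)=-(1+Q)^{-2}$, are both right.

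One small correction that does not affect the conclusion: the claim ``$j=2m+3$'' overshoots. Fa\`a di Bruno applied to $\partial^{m}(1+Q)^{-1}$ produces, for each $m\ge 1$, terms containing $r$ factors of differentiated $Q$ for \emph{every} $1\le r\le m$ (a later derivative may hit an already-produced $Q(\partial^{a}\phi,\partial^{b}\phi)$ rather than the rational factor), so $j=2r+3$ ranges over all odd integers in $\{3,5,\dots,2|k|+3\}$, which is precisely the range asserted in the lemma.
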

The proof of Lemma \ref{lemma-high-order-eq} is done by performing $\partial=(\partial_t,\partial_x)$ on the relativistic string equation \eqref{2.7} for $|k|$ times. For more details, please refer to \cite{Wang-Wei}.

Based on the above preparations, we will now continue with the energy estimates. We intend to prove the following proposition.
\begin{proposition}\label{pro}
Let $N\geq 4$, under the bootstrap assumptions \eqref{bt-Eb-Fb} and \eqref{bt-E-F}, there exists some positive constant $C_1$ such that
\begin{subequations}
\begin{align}
\Eb^2_{N+1} (\ub,t)+ \Fb^2_{N+1}(\ub, t) & \leq I_{N+1}^2 + C_1\delta M^4,\label{energy1}\\
E^2_{N+1}(u,t) + F^2_{N+1}(u, t) & \leq  \delta^2 I^2_{N+1} +C_1 \delta^3 M^6,\label{energy2}
\end{align}
\end{subequations}
for all $t \geq 0$ and $u, \, \ub \in \mathbb{R}$. Here $I_{N+1}$ is a constant depending only on the initial data (up to $N+1$ order derivatives).
\end{proposition}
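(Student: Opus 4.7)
The plan is to apply the divergence identity of Lemma \ref{lem:2.4} to each $\phi_k$, $0\le |k|\le N$, with multiplier $\xi=\TL$ on $\D^+_{\tau,u}$ and $\xi=\TLb$ on $\D^-_{\tau,\ub}$, and to couple the result to the energy identities \eqref{energy-id-1}--\eqref{energy-id-2}. By Lemma \ref{lemma-energy-formula} the boundary integrands on $\Sigma^\pm_{t,u}$, $C^t_u$, and $\Cb^t_{\ub}$ are pointwise equivalent to those defining $E^2_{[k+1]}(u,t)+F^2_{[k+1]}(u,t)$ and $\Eb^2_{[k+1]}(\ub,t)+\Fb^2_{[k+1]}(\ub,t)$, respectively, up to mixed contributions of the form $\Lambdab(\ub)|L\phi|^4|\Lb\phi_k|^2$ and $\Lambda(u)|\Lb\phi|^4|L\phi_k|^2$ which, by the Sobolev bound $|L\phi|^2\lesssim \delta^2 M^2/\Lambdab(\ub)$ of Lemma \ref{lem-sobolev}, carry an extra $\delta^4$ factor and are absorbable. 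The $\Sigma_0$ terms on the right of each identity produce the constants $I_{N+1}^2$ and $\delta^2 I_{N+1}^2$, whose finiteness follows from the hypotheses of Theorem \ref{main-theorem}.

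The core of the argument will be bounding the bulk integrals $\iint_{\D^\pm}\partial_\alpha(\sqrt{g}P^\alpha)\,\di t\,\di x$. By Lemma \ref{lem:2.4} these decompose into: the source $\Box_g\phi_k\cdot \xi\phi_k$, the deformation piece $\T^\alpha_\beta[\phi_k]\partial_\alpha \xi^\beta$, and the metric-variation piece $\tfrac{1}{2\sqrt{g}}\xi(\sqrt{g}g^{\gamma\rho})\partial_\gamma\phi_k\partial_\rho\phi_k$. Lemma \ref{lem-deformation} gives the deformation piece explicitly: each summand is either of the ghost-weight form $\Lambda'(u)|\Lb\phi|^2|L\phi_k|^2$ (to be absorbed as a positive contribution on the left, after checking the favourable sign as in \cite{Luli-Yang-Yu}) or carries an explicit factor of $L\phi$ (possibly wearing one derivative), which supplies a power of $\delta$ by Lemma \ref{lem-sobolev}. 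The source piece is expanded through Lemma \ref{lemma-high-order-eq} into products of $Q(\partial^{i_1}\phi,\partial^{i_2}\phi)=-L\partial^{i_1}\phi\cdot\Lb\partial^{i_2}\phi$; each $Q$-factor splits into a small $L$-leg and a large $\Lb$-leg, and since $|i_1|+\cdots+|i_j|\le |k|$ all but one factor sits at order $\le N-1$ and can be placed in $L^\infty$ via Lemma \ref{lem-sobolev}. The metric-variation piece is handled identically since $\xi(\sqrt{g}g^{\gamma\rho})$ differentiates $Q(\phi,\phi)=-L\phi\,\Lb\phi$.

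A Cauchy--Schwarz pairing of $\xi\phi_k$ with the single remaining top-order factor then reduces each bulk term to a product of weighted $L^2$ energies times a collection of $L^\infty$-small factors. The combinatorics of the double-null structure guarantees that every non-absorbable integrand contains at least one $L$-type derivative beyond what the multiplier already supplies, and the bootstrap \eqref{bt-E-F} converts each such factor into $\delta M$ rather than $M$. For the $\Eb,\Fb$-side, this single $\delta$-gain against three $M$-sized factors yields the claimed $\delta M^4$ bound; for the $E,F$-side the multiplier $\TL$ itself carries an $L$-component, furnishing a further $\delta^2$-gain and leading to $\delta^3 M^6$ with the large right-moving energies entering through the $M^4$ factor. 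Spacetime integrability is guaranteed by $\int_{\mathbb{R}}a(x)^{-1}\,\di x<\infty$ for $a(x)=(1+x^2)^{1+\gamma}$ with $\gamma>0$, absorbing all $t$-dependence into a uniform constant.

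The main obstacle is the top-order regime $|k|=N$, where Lemma \ref{lem-sobolev} is unavailable for the factor $\partial^{N+1}\phi$ that arises when all derivatives concentrate on a single $Q$-leg in the expansion of Lemma \ref{lemma-high-order-eq}. I plan to route that top factor directly into the boundary energy through the multiplier itself: whenever the top-order factor is of $L$-type it pairs with the $L$-component of $\TL$ and is absorbed into $F_{N+1}^2$, while if it is of $\Lb$-type it pairs with the $\Lb$-component of $\TLb$ and is absorbed into $\Fb_{N+1}^2$. Verifying that the remaining $L$-factors in the nonlinearity indeed all sit at order $\le N-1$, so that Lemma \ref{lem-sobolev} delivers the decisive $\delta$, is the technical heart of the proof. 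Summing over $0\le|k|\le N$ and taking the suprema over $u,\ub$ then gives \eqref{energy1}--\eqref{energy2} with some $C_1$ independent of $\delta$ and $M$.
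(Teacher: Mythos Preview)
Your overall architecture matches the paper's, but there are two genuine gaps.

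First, the term $\Lambda'(u)|\Lb\phi|^2|L\phi_k|^2$ (and its $\TL$-counterpart $\Lambdab'(\ub)|L\phi|^2|\Lb\phi_k|^2$) cannot be absorbed on the left with a favourable sign: since $\Lambda(u)=(1+u^2)^{1+\gamma}$, the derivative $\Lambda'(u)$ changes sign, so no Luli--Yang--Yu ghost-weight mechanism applies here. The paper does not rely on a sign; it simply uses $|\Lambda'(u)|\le\Lambda(u)$ and estimates this as an ordinary error term via the weighted Sobolev inequality \eqref{Sobolev-L-infty-weight-Lb}. On the $\Eb,\Fb$ side this is harmless, but on the $E,F$ side it feeds into the second, more serious, issue.

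Second, the claimed $\delta^3M^6$ bound for \eqref{energy2} cannot be obtained by pure $\delta$-counting, and the paper's proof uses a hierarchy plus a Gr\"onwall step that your proposal omits. The problematic terms come from the $\TL$ deformation and metric-variation pieces, e.g.\ $\Lambdab(\ub)|\Lb\phi_k|^2\,|L^2\phi\,L\phi|$ and $\Lambdab(\ub)|\Lb\phi_k|^2|L\phi|^2$. These carry exactly two low-order $L$-legs, so your counting yields at best $\delta^2M^4$, which does not beat $\frac12\delta^2M^2$ when $M$ is large. The paper's remedy is to prove \eqref{energy1} \emph{first} (every error there has an $L$-leg and direct counting gives $\delta M^4$), then feed the improved bound $\sup_\tau\Eb_{k+1}^2(\tau)\lesssim I_{N+1}^2+\delta M^4$ and the improved pointwise bound \eqref{improved-L-infty-Lb} into the $\TL$ identity. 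The dangerous integrals then become $\int_{-\infty}^{u}\Lambda^{-1/2}(u')\,(I_{N+1}^2+\delta M^4)\,F_{N+1}^2(u',t)\,\di u'$, and a Gr\"onwall argument in $u$ (with coefficient $\exp(C(I_{N+1}^2+\delta M^4))$, bounded independently of $M$ once $\delta M^4\le 1$) closes \eqref{energy2}. Without the order ``$\Eb,\Fb$ first, then $E,F$'' and the concluding Gr\"onwall, the bootstrap does not close.
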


Once Proposition \ref{pro} is proved, we can improve the bootstrap assumptions \eqref{bt-Eb-Fb}-\eqref{bt-E-F} as
\begin{align}
& \Eb^2_{N+1}(t )+ \sup_{\ub \in \mathbb{R}} \Fb^2_{N+1}(\ub, t) \leq \frac{M^2}{2}, \label{bt-Eb-Fb-improve}\\
& E^2_{N+1}(t) + \sup_{u \in \mathbb{R}} F^2_{N+1}(u, t) \leq \frac{\delta^2 M^2}{2}, \label{bt-E-F-improve}
\end{align}
and close the bootstrap arguments as follow.
Let $[0, T^\ast]$ be the largest time interval so that  the bootstrap assumptions \eqref{bt-Eb-Fb} and \eqref{bt-E-F} hold true. By the local well poseness for wave equations, we know that $T^\ast >0$. The improvements \eqref{bt-Eb-Fb-improve} and \eqref{bt-E-F-improve} indicate that the bootstrap assumptions \eqref{bt-Eb-Fb}, \eqref{bt-E-F} can be extended to a larger time interval, thus contradicting the maximality of $T^\ast$. And hence we must have $T^\ast = +\infty$.

To derive the improvement, we choose the constant $M$ large enough so that $I_{N+1}^2 \leq \frac{M^2}{4}$ (note that, $M$ depends only on the initial data, in particular, it is independent of $\delta$), and $\delta$ small enough so that $C_1\delta M^4\leq \frac{M^2}{4}$. Thus, \eqref{bt-Eb-Fb-improve} and \eqref{bt-E-F-improve} hold true.

\subsection{Proof of Proposition \ref{pro}}
In this subsection, we will prove Proposition \ref{pro}. As we have explained before, we need to control the $\Eb, \, \Fb$ energies first.
\subsubsection{Estimates for the energies $\Eb_{N+1}$ and $\Fb_{N+1}$.}

Letting $\xi = \TLb$, $\varphi =\phi_k$ in \eqref{energy-id-1}, we have the energy identity
\begin{equation}\label{eq-energy-TLb}
\begin{split}
& \int_{\Sigma^{-}_{t, \ub}} \T[\phi_k] (-D\tau, \TLb) \sqrt{g} \di x+ \int_{\Cb^t_{\ub}} \T[\phi_k] (-D \ub, \TLb) \sqrt{g} \di u \\
={} & \int_{\Sigma^{-}_{0, \ub}} \T[\phi_k] (-D\tau, \TLb)  \sqrt{g} \di x -\iint_{\D^{-}_{t, \ub}}\p_{\alpha}(\sqrt{g}P^{\alpha} [\phi_k,\TLb]) \di \tau\di x.
\end{split}
\end{equation}
Noting \eqref{eq-div-P}, which tells that
\begin{align*}
\frac{1}{\sqrt{g}}\p_{\alpha}(\sqrt{g}P^{\alpha}[\phi_k, \TLb])= \Box_{g(\p\phi)}\phi_k \cdot \TLb\phi_k + \T^{\alpha}_{\beta} [\phi_k] \cdot \p_{\alpha}\TLb^{\beta} - \frac{1}{2\sqrt{g}}\TLb(\sqrt{g}g^{\gamma\rho}) \p_{\gamma}\phi_k \p_{\rho}\phi_k,
\end{align*}
we will bound the nonlinear error terms on the right hand side in what follows.

{\bf The source term:} $\Box_{g(\p\phi)}\phi_k \cdot \TLb \phi_k$.

According to Lemma \ref{lemma-high-order-eq}, $\Box_{g(\p\phi)}\phi_k$ satisfies the cubic double null condition in Minkowski geometry. We can bound it as
\begin{align*}
|\Box_{g(\p\phi)}\phi_k| & \lesssim \sum_{\substack{i, j \leq p \leq k, \\ p+i+j \leq k}} |\Lb \phi_p L^2 \phi_i \Lb \phi_j| + |\Lb \phi_p L \Lb \phi_i L \phi_j| + |L \phi_p \Lb^2 \phi_i L \phi_j| + |L \phi_p \Lb L \phi_i \Lb \phi_j| \\
&+ \sum_{\substack{i, j \leq q <k, \\ q+i+j \leq k}} | L^2 \phi_q \Lb \phi_i \Lb \phi_j| + |L \Lb \phi_q \Lb \phi_i L \phi_j| + |\Lb^2 \phi_q L \phi_i L \phi_j|.
\end{align*}
Substituting the $L^\infty$ estimates of the lower order derivative terms, we have
\begin{align*}
|\Box_{g(\p\phi)}\phi_k| & \lesssim \sum_{\substack{i, j \leq p \leq k, \\ p+i+j \leq k}}  \frac{\delta M^2}{\Lambdab^{\frac{1}{2}} (\ub) \Lambda^{\frac{1}{2}} (u)}  |\Lb \phi_p| + |L \phi_p \Lb^2 \phi_i L \phi_j| + \frac{M^2}{\Lambda (u)}   |L \phi_p| \\
&+ \sum_{\substack{i, j \leq q <k, \\ q+i+j \leq k}} \frac{M^2}{\Lambda (u)}  | L^2 \phi_q| +  \frac{\delta M^2}{\Lambdab^{\frac{1}{2}} (\ub) \Lambda^{\frac{1}{2}} (u)} |\Lb L \phi_q| + \frac{\delta^2 M^2}{\Lambdab (\ub) }  |\Lb^2 \phi_q|.
\end{align*}
Here we keep the term $|L \phi_p \Lb^2 \phi_i L \phi_j|$ unchanged since it needs a different treatment to gain sufficient decay. This can be found in \eqref{source-Lb-special}.

Specifically, we split the source term into two parts: $\Box_{g(\p\phi)}\phi_k \cdot \TLb \phi_k = \Box_{g(\p\phi)}\phi_k \cdot \Lambda (u) \Lb \phi_k +  \Box_{g(\p\phi)}\phi_k \cdot \Lambda (u) (\Lb \phi)^2 L \phi_k$. The first part $\Box_{g(\p\phi)}\phi_k \cdot \Lambda (u) \Lb \phi_k$, which is also the dominating one, can be estimated as follows. Among all the calculations below, we should always remind ourselves that $p \leq k$ and $q +1\leq k$. Here we have,
\begin{align*}
& \iint_{\D^{-}_{t, \ub}} \Lambda  (u) |\Lb \phi_k| \left(\frac{\delta M^2}{\Lambdab^{\frac{1}{2}} (\ub) \Lambda^{\frac{1}{2}} (u)} ( |\Lb \phi_p|+  |\Lb L \phi_q| ) + \frac{\delta^2 M^2}{\Lambdab (\ub) }  |\Lb^2 \phi_q|  \right)  \sqrt{g} \di \tau \di x \\
\lesssim {} &   \int^{\ub}_{-\infty} \frac{\delta M^2}{\Lambdab^{\frac{1}{2}}(\ub^\prime)} \di \ub^\prime \int_{\Cb^t_{\ub^\prime}} \Lambda (u) \sum_{j \leq k} |\Lb \phi_j|^2 \sqrt{g} \di u \lesssim \int^{\ub}_{-\infty} \frac{\delta M^2}{\Lambdab^{\frac{1}{2}}(\ub^\prime)} \Fb_{k+1}^2 (\ub^\prime, t) \di \ub^\prime \\
\lesssim {}& \delta M^4.
\end{align*}
For the term involving $|L \phi_p \Lb^2 \phi_i L \phi_j|$, we follow \cite{Luli-Yang-Yu} to derive
\begin{align}
& \iint_{\D^{-}_{t, \ub}}\Lambda (u) | \Lb \phi_k | |L \phi_p \Lb^2 \phi_i L \phi_j| \sqrt{g} \di \tau \di x \nnb \\
\lesssim {} & \iint_{\D^{-}_{t, \ub}}\Lambda (u) | \Lb \phi_k | |L \phi_p \Lb^2 \phi_i| \frac{\delta M}{\Lambdab^{\frac{1}{2}}(\ub^\prime)} \sqrt{g} \di \tau \di x \nnb \\
\lesssim {} &\delta M \left(\iint_{\D^{-}_{t, \ub}} \frac{\Lambda (u)}{\Lambdab (\ub^\prime)} |\Lb \phi_k|^2 \sqrt{g} \di \tau \di x \right)^{\frac{1}{2}} \nnb \\
& \qquad \qquad \quad \cdot \sup_{\tau} \left(\int_{\Sigma^{-}_{\tau, \ub}} \Lambdab(\ub^\prime)  |L \phi_p|^2 \sqrt{g} \di x \right)^{\frac{1}{2}} \left(\int_0^t \Big\| \frac{\Lambda^{\frac{1}{2}}(u)}{\Lambdab^{\frac{1}{2}} (\ub^\prime)} |\Lb^2 \phi_i|  \Big\|^2_{L^\infty (\Sigma^{-}_{\tau, \ub}) } \di \tau \right)^{\frac{1}{2}} \nnb \\
\lesssim {} & \delta M \sup_\tau E^2_{p+1} (\tau) \left(\iint_{\D^{-}_{t, \ub}} \frac{\Lambda (u)}{\Lambdab (\ub^\prime)} |\Lb \phi_k|^2 \sqrt{g} \di \tau \di x  \right)^{\frac{1}{2}} \sum_{l \leq 1} \left(\iint_{\D^{-}_{t, \ub} } \frac{\Lambda(u)}{\Lambdab (\ub^\prime)} |\Lb^2 \p^l_x \phi_i|^2 \di \tau \di x  \right)^{\frac{1}{2}} \nnb \\
 \lesssim {} &  \delta^2 M^2 \sum_{l \leq N} \int^{\ub}_{-\infty} \frac{1}{\Lambdab (\ub^\prime)} \di \ub^\prime \int_{\Cb^t_{\ub^\prime}} \Lambda(u) |\Lb \phi_l|^2 \di u \lesssim \int^{\ub}_{-\infty} \frac{ \delta^2 M^2}{\Lambdab (\ub^\prime)}  \Fb_{N+1}^2 (\ub^\prime, t) \di \ub^\prime \nnb \\
\lesssim {}& \delta^2 M^4, \label{source-Lb-special}
\end{align}
where in the third inequality, we have used the weighted Sobolev inequality \eqref{Sobolev-L-infty-weight-Lb}\footnote{Since \eqref{Sobolev-L-infty-weight-Lb} and \eqref{Sobolev-L-infty-weight-L} are frequently used in our estimates, we will not always point it out.}.
And then we are left with the term
\begin{align}
& \iint_{\D^{-}_{t, \ub}} \Lambda  (u) |\Lb \phi_k|  \frac{M^2}{\Lambda (u)} \left(  |L \phi_p| + | L^2 \phi_q|  \right) \sqrt{g} \di \tau \di x \lesssim  \iint_{\D^{-}_{t, \ub}} M^2 |\Lb \phi_k| \sum_{i \leq k} |L \phi_i|  \sqrt{g} \di \tau \di x \nnb \\
\lesssim {} & M^2 \left(\iint_{\D^{-}_{t, \ub}} \frac{\Lambda (u)}{\Lambdab (\ub)} |\Lb \phi_k|^2 \sqrt{g} \di \tau \di x \right)^{\frac{1}{2}} \sum_{i \leq k} \left(\iint_{\D^{-}_{t, \ub}} \frac{\Lambdab (\ub)}{\Lambda (u)} |L \phi_i|^2 \sqrt{g} \di \tau \di x \right)^{\frac{1}{2}}\nnb \\
\lesssim {} & \left(\int^{\ub}_{-\infty} \frac{M}{\Lambdab (\ub^\prime)} \di \ub^\prime \int_{\Cb^t_{\ub^\prime}} \Lambda (u) |\Lb \phi_k|^2 \sqrt{g} \di u \right)^{\frac{1}{2}} \left(\int^{+\infty}_{-\infty} \frac{M}{\Lambda (u)} \di u \int_{C^t_{u}} \Lambdab (\ub^\prime) \sum_{i \leq k} |L \phi_i|^2 \sqrt{g} \di \ub^\prime \right)^{\frac{1}{2}}  \nnb \\
\lesssim {} & \left(\int^{\ub}_{-\infty} \frac{M}{\Lambdab (\ub^\prime)} \Fb_{k+1}^2 (\ub^\prime, t) \di \ub^\prime \right)^{\frac{1}{2}} \left(\int^{+\infty}_{-\infty} \frac{M}{\Lambda (u)} F_{k+1}^2(u, t) \di u \right)^{\frac{1}{2}} \nnb \\
\lesssim {}& \delta M^4. \label{terms-L-Lb}
\end{align}
In fact, \eqref{terms-L-Lb} represents a typical estimate for terms like $\iint  |\Lb \phi_i|  |L \phi_j|  \di \tau \di x$. And we will always come to this point whenever we are encountered with this situation.

Next, we come to the second part $\Box_{g(\p\phi)}\phi_k \cdot \Lambda (u) (\Lb \phi)^2 L \phi_k$.
\begin{align*}
& \iint_{\D^{-}_{t, \ub}} \Lambda  (u) |\Lb \phi|^2 |L \phi_k| \left(\frac{\delta M^2}{\Lambdab^{\frac{1}{2}} (\ub) \Lambda^{\frac{1}{2}} (u)} ( |\Lb \phi_p|+  |\Lb L \phi_q| ) + \frac{\delta^2 M^2}{\Lambdab (\ub) }  |\Lb^2 \phi_q|  \right) \sqrt{g} \di \tau \di x  \\
\lesssim & \iint_{\D^{-}_{t, \ub}} M^2 |L \phi_k| \left(\frac{\delta M^2}{\Lambdab^{\frac{1}{2}} (\ub) \Lambda^{\frac{1}{2}} (u)} ( |\Lb \phi_p|+  |\Lb L \phi_q| ) + \frac{\delta^2 M^2}{\Lambdab (\ub) }  |\Lb^2 \phi_q|  \right)  \sqrt{g} \di \tau \di x \\
\lesssim & \iint_{\D^{-}_{t, \ub}} \delta M^4 |L \phi_k|  \sum_{i \leq k} |\Lb \phi_i| \sqrt{g} \di \tau \di x  \quad \text{by \eqref{terms-L-Lb}}   \\
\lesssim {}& \delta^2 M^6.
\end{align*}
And
\begin{align*}
& \iint_{\D^{-}_{t, \ub}} \Lambda  (u) |\Lb \phi|^2 |L \phi_k| \left( \frac{M^2}{\Lambda (u)} \left(  |L \phi_p| + | L^2 \phi_q|  \right) +  |L \phi_p \Lb^2 \phi_i L \phi_j| \right) \sqrt{g} \di \tau \di x \\
\lesssim {} & \iint_{\D^{-}_{t, \ub}} M^2 |L \phi_k| \left( \frac{M^2}{\Lambda (u)} \left(  |L \phi_p| + | L^2 \phi_q|  \right) +   \frac{\delta M^2}{\Lambdab^{\frac{1}{2}} (\ub) \Lambda^{\frac{1}{2}} (u)} |L \phi_p| \right) \sqrt{g} \di \tau \di x \\
\lesssim {} & M^4 \sum_{i \leq k} \iint_{\D_{t}} \frac{\Lambdab (\ub)}{\Lambda^{\frac{1}{2}} (u)} |L \phi_k| |L \phi_i| \sqrt{g} \di \tau \di x \lesssim \delta^2 M^6.
\end{align*}

{\bf The deformation tensor:} $\T^{\alpha}_{\beta} [\phi_k] \cdot \p_{\alpha}\TLb^{\beta} - \frac{1}{2\sqrt{g}}\TLb(\sqrt{g}g^{\gamma\rho}) \p_{\gamma}\phi_k \p_{\rho}\phi_k$.

For $\T^{\alpha}_{\beta} [\phi_k] \cdot \p_{\alpha}\TLb^{\beta}$, in views of Lemma \ref{lem-deformation},
\begin{align}
|\T^{\alpha}_{\beta} [\phi_k]  \p_{\alpha} \TLb^{\beta}| \lesssim {}& \left( |L \phi_k|^2 + |L \phi \Lb \phi| |L \phi_k|^2 + |L \phi|^2 |L \phi_k \Lb \phi_k|  \right)  |\Lambda^\prime (u)| |\Lb \phi|^2  \nnb \\
& + \left( |L \phi_k|^2 + |L \phi \Lb \phi| |L \phi_k|^2 + |L \phi|^2 |L \phi_k \Lb \phi_k|  \right)  \Lambda (u) \Lb^2 \phi \Lb \phi  \nnb \\
& + \left( |\Lb \phi|^2 |L \phi_k|^2 +  |L \phi|^2 |\Lb \phi_k|^2  \right) \cdot  \Lambda (u) L \Lb  \phi \Lb \phi. \label{deformation-Lb}
\end{align}
In the first line of \eqref{deformation-Lb}, there is the leading term in $ \left( |L \phi_k|^2 + |L \phi \Lb \phi| |L \phi_k|^2  \right)  |\Lambda^\prime (u)| |\Lb \phi|^2$ as follow,
\begin{align}
& \iint_{\D^{-}_{t, \ub}} |L \phi_k|^2 |\Lambda^\prime (u)| |\Lb \phi|^2 \sqrt{g} \di \tau \di x  \leq  \iint_{\D^{-}_{t, \ub}} \Lambda (u) |L \phi_k|^2  |\Lb \phi|^2 \sqrt{g} \di \tau \di x \nnb \\
\lesssim &{} \sup_\tau \int_{\Sigma^{-}_{\tau, \ub}} \Lambdab (\ub^\prime) |L \phi_k|^2 \sqrt{g} \di x  \int_0^t \Big\|\frac{\Lambda^{\frac{1}{2}}(u)}{\Lambdab^{\frac{1}{2}}(\ub^\prime)} \Lb \phi\Big\|^2_{L^\infty (\Sigma^{-}_{\tau, \ub})} \di \tau \nnb \\
\lesssim &{} \sup_\tau E^2_{k+1} (\tau) \iint_{\D^{-}_{t, \ub}} \frac{\Lambda(u)}{\Lambdab(\ub^\prime)} (|\Lb \phi|^2 + |\Lb \p_x \phi|^2) \di \tau \di x \nnb \\
\lesssim &{} \delta^2 M^4, \label{esti-leading-Lb}
\end{align}
and the remaining term
\begin{align*}
& \iint_{\D^{-}_{t, \ub}} |L \phi|^2 |L \phi_k \Lb \phi_k| |\Lambda^\prime (u)| |\Lb \phi|^2  \sqrt{g} \di \tau \di x \\
\lesssim {} & \iint_{\D^{-}_{t, \ub}} \frac{\delta^2 M^4}{\Lambdab(\ub^\prime)} |L \phi_k \Lb \phi_k|  \sqrt{g} \di \tau \di x \lesssim \delta^3 M^6
\end{align*}
 can be treated in an analogous fashion as \eqref{terms-L-Lb}.
The second line of \eqref{deformation-Lb} is similar to the first line, hence it suffices to check the following leading term,
\begin{align}
& \iint_{\D^{-}_{t, \ub}} |L \phi_k|^2 \Lambda (u) |\Lb^2 \phi \Lb \phi|  \sqrt{g} \di \tau \di x \nnb \\
\lesssim &{} \sup_\tau \int_{\Sigma^{-}_{\tau, \ub}} \Lambdab (\ub^\prime) |L \phi_k|^2  \sqrt{g} \di x \int_0^t \Big\|\frac{\Lambda^{\frac{1}{2}}(u)}{\Lambdab^{\frac{1}{2}}(\ub^\prime)} \Lb \phi\Big\|_{L^\infty (\Sigma^{-}_{\tau, \ub})} \Big\|\frac{\Lambda^{\frac{1}{2}}(u)}{\Lambdab^{\frac{1}{2}}(\ub^\prime)} \Lb^2 \phi\Big\|_{L^\infty (\Sigma^{-}_{\tau, \ub})} \di \tau \nnb \\
\lesssim &{} \delta^2 M^2 \int_0^t \left( \Big\|\frac{\Lambda^{\frac{1}{2}}(u)}{\Lambdab^{\frac{1}{2}}(\ub^\prime)} \Lb \phi\Big\|^2_{L^\infty (\Sigma^{-}_{\tau, \ub})} + \Big\|\frac{\Lambda^{\frac{1}{2}}(u)}{\Lambdab^{\frac{1}{2}}(\ub^\prime)} \Lb^2 \phi\Big\|^2_{L^\infty (\Sigma^{-}_{\tau, \ub})}\right) \di \tau \nnb \\
\lesssim &{} \delta^2 M^2 \sum_{i \leq 1} \iint_{\D^{-}_{t, \ub}} \frac{\Lambda(u)}{\Lambdab(\ub^\prime)} (|\Lb \p^i_x \phi|^2 + |\Lb^2 \p^i_x \phi|^2) \di \tau \di x \lesssim  \delta^2 M^4. \label{esti-crucial-Lb}
\end{align}
For the third line of \eqref{deformation-Lb},
\begin{align*}
& \iint_{\D^{-}_{t, \ub}} \left( |\Lb \phi|^2 |L \phi_k|^2 +  |L \phi|^2 |\Lb \phi_k|^2  \right)   \Lambda (u)  |L \Lb  \phi \Lb \phi| \sqrt{g} \di \tau \di x \\
\lesssim {} & \iint_{\D^{-}_{t, \ub}} \left( \frac{\delta M^4}{\Lambda^{\frac{1}{2}}(u)}  |L \phi_k|^2 + \frac{\delta^3 M^4}{\Lambdab(\ub^\prime)} \Lambda (u)  |\Lb \phi_k|^2 \right) \sqrt{g} \di \tau \di x  \lesssim  \delta^3 M^6. \\
\end{align*}

For $\frac{1}{2\sqrt{g}}\TLb(\sqrt{g} g^{\gamma\rho}) \p_{\gamma}\phi_k \p_{\rho}\phi_k$, we calculate straightforwardly,
\begin{align}
& \frac{1}{2\sqrt{g}}\TLb(\sqrt{g}g^{\gamma\rho}) \p_{\gamma}\phi_k \p_{\rho}\phi_k = \frac{\TLb Q(\phi, \phi)}{2 g}  g^{\gamma\rho} \p_{\gamma}\phi_k \p_{\rho}\phi_k + \TLb  g^{\gamma\rho} \p_{\gamma}\phi_k \p_{\rho}\phi_k \nnb \\
= {} & \frac{\TLb \p_\mu \phi \p^\mu \phi}{g} g^{\gamma\rho} \p_{\gamma}\phi_k \p_{\rho}\phi_k -  \frac{2}{g} \TLb \p^\gamma \phi \p^\rho \phi \p_{\gamma}\phi_k \p_{\rho}\phi_k + \frac{2 \TLb \p_\mu \phi \p^\mu \phi}{g^2} \p^\gamma \phi \p^\rho \phi \p_{\gamma}\phi_k \p_{\rho}\phi_k \nnb \\
={}& \frac{1}{2g} \left(\TLb \Lb \phi L \phi + \TLb L \phi \Lb \phi \right) \Lb \phi_k L \phi_k -  \frac{1}{2g} ( \TLb \Lb \phi L \phi \Lb \phi_k L \phi_k + \TLb L \phi \Lb \phi L \phi_k \Lb \phi_k ) \nnb \\
&-  \frac{1}{2g} ( \TLb \Lb \phi \Lb \phi L \phi_k L \phi_k + \TLb L \phi L \phi \Lb \phi_k \Lb \phi_k) + \frac{ \TLb \p_\mu \phi \p^\mu \phi}{g^2} \p^\gamma \phi \p^\rho \phi \p_{\gamma}\phi_k \p_{\rho}\phi_k \nnb \\
={}& -  \frac{1}{2g} ( \TLb \Lb \phi \Lb \phi |L \phi_k|^2 + \TLb L \phi L \phi |\Lb \phi_k|^2) + \frac{ \TLb \p_\mu \phi \p^\mu \phi}{g^2} \p^\gamma \phi \p^\rho \phi \p_{\gamma}\phi_k \p_{\rho}\phi_k, \label{deform-error-Lb-2}
\end{align}
where we notice that the third line above cancels (although this cancellation is irrelevant at all, because each term in the third line enjoys good estimate).

In the two portions of $\TLb = \Lambda (u) \Lb + \Lambda (u) |\Lb \phi|^2 L$, the contribution of $\Lambda (u) \Lb$ is always larger than $ \Lambda (u) |\Lb \phi|^2 L$. This can be verified from the following comparisons
$$|\Lb \p \phi| \lesssim \frac{M}{ \Lambda^{\frac{1}{2}} (u)}, \quad |\Lb \phi|^2 |L \p \phi| \lesssim \frac{\delta M^3}{\Lambdab^{\frac{1}{2}} (\ub) \Lambda(u)}.$$
Consequently, it suffices to check the following terms.
\begin{align*}
& \iint_{\D^{-}_{t, \ub}} \Lambda (u) |L\Lb \phi L\phi| |\Lb \phi_k|^2 \sqrt{g} \di \tau \di x  \\
\lesssim {}&  \iint_{\D^{-}_{t, \ub}} \frac{\delta^2 M^2}{\Lambdab(\ub^\prime)} \Lambda (u) |\Lb \phi_k|^2 \sqrt{g} \di \tau \di x   \lesssim  \delta^2 M^4.
\end{align*}
And the bound $\iint_{\D^{-}_{t, \ub}} \Lambda (u) |\Lb^2 \phi \Lb \phi|  |L \phi_k|^2 \sqrt{g} \di \tau \di x  \lesssim \delta^2 M^4$ has been shown in \eqref{esti-crucial-Lb}.
Eventually, among all the terms in the final result \eqref{deform-error-Lb-2}, the last one $\frac{ \TLb \p_\mu \phi \p^\mu \phi}{g^2} \p^\gamma \phi \p^\rho \phi \p_{\gamma}\phi_k \p_{\rho}\phi_k$ is obvious a lower order term. Anyway, it is straightforward to find that $$\iint_{\D^{-}_{t, \ub}} \frac{ |\TLb \p_\mu \phi \p^\mu \phi |}{g^2} | \p^\gamma \phi \p^\rho \phi \p_{\gamma}\phi_k \p_{\rho}\phi_k|  \sqrt{g} \di \tau \di x \lesssim \delta^3 M^6.$$

Combining all the above calculations and summing over all $k\leq N$, we achieve
\bel{energy-bound-Eb-Fb}
\Eb^2_{N+1}(\ub,t)+ \Fb^2_{N+1} (\ub, t) \lesssim \Eb^2_{N+1} (\ub, 0) + \delta M^4  \lesssim I_{N+1}^2 + \delta M^4.
\ee
As a result of \eqref{energy-bound-Eb-Fb}, we can update the following $L^\infty$ estimates,
\bel{improved-L-infty-Lb}
|\Lb \phi_k| \lesssim \frac{I_{N+1} + \sqrt{\delta} M^2}{\Lambda^{\frac{1}{2}} (u)}, \quad k \leq N-1.
\ee
Both the improvements \eqref{energy-bound-Eb-Fb} and \eqref{improved-L-infty-Lb} will be crucial in estimating the $E, \, F$, energies which we will present in the next subsection.

\subsubsection{Estimates for energies $E^2_{N+1}$ and $F^2_{N+1}$.}
Letting $\xi = \TL, \, \varphi = \phi_k$ in \eqref{energy-id-2}, we have
\begin{equation}\label{eq-energy-TL}
\begin{split}
& \int_{\Sigma^{+}_{t, u}} \T[\phi_k] (-D\tau, \TL) \sqrt{g} \di x+ \int_{C^t_{u}} \T[\phi_k] (-D u, \TL) \sqrt{g} \di u \\
&= \int_{\Sigma^{+}_{0, u}} \T[\phi_k] (-D\tau, \TL)\sqrt{g} \di x-\iint_{\D^{+}_{t, u}}\p_{\alpha}(\sqrt{g}P^{\alpha} [\phi_k,\TL]) \di \tau \di x.
\end{split}
\end{equation}
And in views of \eqref{eq-div-P}, the nonlinear error terms take the form of
\begin{align*}
\frac{1}{\sqrt{g}}\p_{\alpha}(\sqrt{g}P^{\alpha}[\phi_k, \TL])= \Box_{g(\p\phi)}\phi_k \cdot \TL \phi_k + \T^{\alpha}_{\beta} (\phi_k) \cdot \p_{\alpha}\TL^{\beta} - \frac{1}{2\sqrt{g}}\TL (\sqrt{g}g^{\gamma \rho}) \p_{\gamma}\phi_k \p_{\rho}\phi_k.
\end{align*}

{\bf The source term:} $\Box_{g(\p\phi)}\phi_k \cdot \TL \phi_k$.

As what has been explained before, there is
\begin{align*}
|\Box_{g(\p\phi)}\phi_k| & \lesssim \sum_{\substack{i, j \leq p \leq k, \\ p+i+j \leq k}} |\Lb \phi_p L^2 \phi_i \Lb \phi_j| + |\Lb \phi_p L \Lb \phi_i L \phi_j| + |L \phi_p \Lb^2 \phi_i L \phi_j| + |L \phi_p L \Lb \phi_i \Lb \phi_j| \\
&+ \sum_{\substack{i, j \leq q <k, \\ q+i+j \leq k}} | L^2 \phi_q \Lb \phi_i \Lb \phi_j| + |L \Lb \phi_q \Lb \phi_i L \phi_j| + |\Lb^2 \phi_q L \phi_i L \phi_j|.
\end{align*}
Again we substitute the $L^\infty$ estimates for the lower order derivative terms to obtain
\begin{align*}
|\Box_{g(\p\phi)}\phi_k| & \lesssim \sum_{i, j \leq p \leq k, \, p+i+j \leq k} |\Lb \phi_p L^2 \phi_i \Lb \phi_j| + \frac{\delta^2 M^2}{\Lambdab (\ub) }  |\Lb \phi_p| +  \frac{\delta M^2}{\Lambdab^{\frac{1}{2}} (\ub) \Lambda^{\frac{1}{2}} (u)}  |L \phi_p| \\
&+ \sum_{i, j \leq q <k, \, q+i+j \leq k} \frac{I^2_{N+1} + \delta M^4}{\Lambda (u)}  | L^2 \phi_q| +  \frac{\delta M^2}{\Lambdab^{\frac{1}{2}} (\ub) \Lambda^{\frac{1}{2}} (u)} |L \Lb \phi_q| + \frac{\delta^2 M^2}{\Lambdab (\ub) }  |\Lb^2 \phi_q|.
\end{align*}
In particular, the improvement \eqref{improved-L-infty-Lb} is needed in bounding $| L^2 \phi_q \Lb \phi_i \Lb \phi_j|$.
Besides, different from the previous case, here we keep the form of the term $|\Lb \phi_p L^2 \phi_i \Lb \phi_j|$ so that it can be estimated in the following way.

We also split the source term into two parts: $\Box_{g(\p\phi)}\phi_k \cdot \TL \phi_k = \Box_{g(\p\phi)}\phi_k \cdot \Lambdab (\ub) L \phi_k +  \Box_{g(\p\phi)}\phi_k \cdot \Lambdab (\ub) (L \phi)^2 \Lb \phi_k$. And the first part $\Box_{g(\p\phi)}\phi_k \cdot \Lambdab (\ub) L \phi_k$ is the primary term, for which we will begin with the following estimates,
\begin{align*}
& \iint_{\D^{+}_{t, u}}\Lambdab (\ub) |L \phi_k | |\Lb \phi_p L^2 \phi_i \Lb \phi_j| \sqrt{g} \di \tau \di x  \\
\lesssim {} & \iint_{\D^{+}_{t, u}}\Lambdab (\ub) |L \phi_k | |\Lb \phi_p L^2 \phi_i | \frac{I_{N+1} + \sqrt{\delta} M^2}{\Lambda^{\frac{1}{2}} (u^\prime) } \sqrt{g} \di \tau \di x  \\
\lesssim {}& (I_{N+1} + \sqrt{\delta} M^2) \left(\iint_{\D^{+}_{t, u}} \frac{\Lambdab (\ub)}{\Lambda (u^\prime)} |L \phi_k|^2 \sqrt{g} \di \tau \di x  \right)^{\frac{1}{2}} \\
& \qquad  \cdot \sup_\tau \left(\int_{\Sigma^{+}_{\tau, u}} \Lambda(u^\prime)  |\Lb \phi_p|^2 \sqrt{g} \di x  \right)^{\frac{1}{2}} \left(\int_0^t \Big\| \frac{\Lambdab^{\frac{1}{2}}(\ub)}{\Lambda^{\frac{1}{2}} (u^\prime)} |L^2 \phi_i|  \Big\|^2_{L^\infty (\Sigma^{+}_{\tau, u}) } \di \tau \right)^{\frac{1}{2}} \\
\lesssim {} &  (I_{N+1} + \sqrt{\delta} M^2) \sup_\tau \Eb^2_{p+1} (t) \left(\iint_{\D^{+}_{t, u}} \frac{\Lambdab (\ub)}{\Lambda(u^\prime)} |L \phi_k|^2 \sqrt{g} \di \tau \di x  \right)^{\frac{1}{2}} \\
& \qquad \cdot \sum_{l \leq 1} \left(\iint_{\D^{+}_{t, u} } \frac{\Lambdab(\ub)}{\Lambda (u^\prime)} |L^2 \p^l_x \phi_i|^2 \sqrt{g} \di \tau \di x  \right)^{\frac{1}{2}} \\
 \lesssim {} & (I^2_{N+1} + \delta M^4)  \int^{u}_{-\infty} \frac{1}{\Lambda (u^\prime)} \di u^\prime \int_{C^t_{u^\prime}} \Lambdab(\ub) \sum_{l \leq N} |L \phi_l|^2 \di \ub,
\end{align*}
where we have used  \eqref{energy-bound-Eb-Fb}-\eqref{improved-L-infty-Lb} in the first and third inequalities.
And
\begin{align*}
& \iint_{\D^{+}_{t, u}} \Lambdab (\ub) |L \phi_k|  \left( \frac{I^2_{N+1} + \delta M^4}{\Lambda (u^\prime)}  | L^2 \phi_q| +  \frac{\delta M^2}{\Lambdab^{\frac{1}{2}} (\ub) \Lambda^{\frac{1}{2}} (u^\prime)} (  |L \phi_p|  + |L \Lb \phi_q| ) \right) \sqrt{g} \di \tau \di x  \\
\lesssim {} &   \int^{u}_{-\infty} \left( \frac{I^2_{N+1} + \delta M^4}{\Lambda (u^\prime)} +  \frac{\delta M^2}{ \Lambda^{\frac{1}{2}} (u^\prime)} \right) \di u^\prime \int_{C^t_{u^\prime}} \Lambdab (\ub) \sum_{l \leq k} |L \phi_l|^2 \sqrt{g}  \di \ub  \\
\lesssim {} &   \int^{u}_{-\infty}  \frac{I^2_{N+1} + \delta M^4}{\Lambda^{\frac{1}{2}} (u^\prime)}   \di u^\prime \int_{C^t_{u^\prime}} \Lambdab (\ub) \sum_{l \leq k} |L \phi_l|^2 \sqrt{g} \di \ub  + \delta^3 M^4.
\end{align*}
And the remaining one is a lower order term,
\begin{align}
& \iint_{\D^{+}_{t, u}} \Lambdab (\ub) |L  \phi_k| \left(|\Lb^2 \phi_q| + |\Lb \phi_p| \right) \frac{\delta^2 M^2}{\Lambdab (\ub)} \sqrt{g} \di \tau \di x \nnb \\
 \lesssim {}& \iint_{\D^{+}_{t, u}} \delta^2 M^2 |L  \phi_k| \sum_{l \leq k} |\Lb \phi_l| \sqrt{g} \di \tau \di x  \quad \text{analogous to \eqref{terms-L-Lb} } \nnb \\
 \lesssim {} & \left( \int^{u}_{-\infty} \frac{\delta^2 M^2}{\Lambda (u^\prime)} \di u^\prime \int_{C^t_{u^\prime}} \Lambdab (\ub) |L \phi_k|^2 \di \ub \right)^{\frac{1}{2}}  \left( \int^{+\infty}_{-\infty} \frac{\delta^2 M^2}{\Lambdab (\ub^\prime)} \di \ub^\prime \int_{\Cb^t_{\ub^\prime}} \Lambda (u) \sum_{l \leq k} |\Lb \phi_l|^2 \di u \right)^{\frac{1}{2}} \nnb \\
\lesssim {}& \delta^3 M^4. \label{terms-L-Lb-1}
\end{align}
Now, we turn to the $\Box_{g(\p\phi)}\phi_k \cdot \Lambdab (\ub) (L \phi)^2 \Lb \phi_k$ part.
\begin{align*}
& \iint_{\D^{+}_{t, u}} \Lambdab (\ub) |L \phi|^2 |\Lb \phi_k| \left( \frac{I^2_N + \delta M^4}{\Lambda (u)}  | L^2 \phi_q| +  \frac{\delta M^2}{\Lambdab^{\frac{1}{2}} (\ub) \Lambda^{\frac{1}{2}} (u)} (  |L \phi_p|  + |L \Lb \phi_q| ) \right)  \sqrt{g} \di \tau \di x \\
\lesssim {} & \iint_{\D^{+}_{t, u}} \delta^2 M^4  |\Lb \phi_k|  \sum_{i \leq k}|L \phi_i| \sqrt{g} \di \tau \di x  \quad \text{by \eqref{terms-L-Lb-1} } \\
\lesssim {}& \delta^3 M^6.
\end{align*}
And
\begin{align*}
& \iint_{\D^{+}_{t, u}} \Lambdab (\ub) |L \phi|^2 |\Lb \phi_k| \left(|\Lb \phi_p L^2 \phi_i \Lb \phi_j| + \frac{\delta^2 M^2}{\Lambdab (\ub) } ( |\Lb \phi_p| + |\Lb^2 \phi_q|  ) \right) \sqrt{g} \di \tau \di x \\
 \lesssim {} & \iint_{\D^{+}_{t, u}} \delta^2 M^2 |\Lb \phi_k| \frac{\delta M^2}{\Lambdab^{\frac{1}{2}}(\ub)} \sum_{l \leq k} | \Lb \phi_l| \sqrt{g} \di \tau \di x \lesssim \delta^3 M^6.
\end{align*}

{\bf The deformation tensor:} $\T^{\alpha}_{\beta} [\phi_k] \cdot \p_{\alpha}\TL^{\beta} - \frac{1}{2\sqrt{g}}\TL(\sqrt{g}g^{\gamma\rho}) \p_{\gamma}\phi_k \p_{\rho}\phi_k$.

For $\T^{\alpha}_{\beta} [\phi_k] \cdot \p_{\alpha}\TL^{\beta}$, there is, in views of Lemma \ref{lem-deformation},
\begin{align*}
|\T^{\alpha}_{\beta} [\phi_k] \p_{\alpha} \TL^{\beta}| \lesssim {} & \left(  |\Lb \phi_k|^2 + |L \phi \Lb \phi| |\Lb \phi_k|^2 +  |\Lb \phi|^2 | L \phi_k \Lb \phi_k |  \right) | \Lambdab^\prime (\ub)| |L \phi|^2 \\
&+ \left(  |\Lb \phi_k|^2 + |L \phi \Lb \phi| |\Lb \phi_k|^2 +  |\Lb \phi|^2 | L \phi_k \Lb \phi_k |  \right) \Lambdab (\ub) |L^2 \phi L \phi|  \\
& + \left( |\Lb \phi|^2 |L \phi_k|^2 +  |L \phi|^2 |\Lb \phi_k|^2  \right)  \Lambdab (\ub) | L \Lb  \phi L \phi|.
\end{align*}
It suffices to check the following terms. In analogy with \eqref{esti-leading-Lb} and \eqref{esti-crucial-Lb}, the leading terms can be manipulated as follows.
\begin{align*}
& \iint_{\D^{+}_{t, u}} |\Lb \phi_k|^2 |\Lambdab^\prime (\ub)| |L \phi|^2  \sqrt{g} \di \tau \di x \leq  \iint_{\D^{+}_{t, u}} \Lambdab (\ub) |\Lb \phi_k|^2  |L \phi|^2  \sqrt{g} \di \tau \di x  \\
\lesssim &{} \sup_\tau \int_{\Sigma^{+}_{\tau, u}} \Lambda (u^\prime) |\Lb \phi_k|^2 \sqrt{g} \di x  \int_0^t \Big\|\frac{\Lambdab^{\frac{1}{2}}(\ub)}{\Lambda^{\frac{1}{2}}(u^\prime)} L \phi\Big\|^2_{L^\infty (\Sigma^{+}_{\tau, u})} \di \tau \\
\lesssim &{} \sup_\tau \Eb^2_{k+1}(\tau) \iint_{\D^{+}_{t, u}} \frac{\Lambdab(\ub)}{\Lambda(u^\prime)} (|L \phi|^2 + |L \p_x \phi|^2) \sqrt{g} \di \tau \di x \\
\lesssim &{}  \int^{u}_{-\infty} \frac{I_{N+1}^2 + \delta M^4}{\Lambda(u^\prime)} F^2_{N+1} (u^\prime, t) \di u^\prime,
\end{align*}
and
\begin{align}
& \iint_{\D^{+}_{t, u}} |\Lb \phi_k|^2 \Lambdab (\ub) |L^2 \phi L \phi|  \sqrt{g} \di \tau \di x  \nnb \\
\lesssim &{} \sup_\tau \int_{\Sigma^{+}_{\tau, u}} \Lambda (u^\prime) |\Lb \phi_k|^2  \sqrt{g} \di x  \int_0^t \Big\|\frac{\Lambdab^{\frac{1}{2}}(\ub)}{\Lambda^{\frac{1}{2}}(u^\prime)} L \phi\Big\|_{L^\infty (\Sigma^{+}_{\tau, u})} \Big\|\frac{\Lambdab^{\frac{1}{2}}(\ub)}{\Lambda^{\frac{1}{2}}(u^\prime)} L^2 \phi\Big\|_{L^\infty (\Sigma^{+}_{\tau, u})} \di \tau \nnb \\
\lesssim &{} \sup_\tau \Eb^2_{k+1}(\tau) \int_0^t \left( \Big\|\frac{\Lambdab^{\frac{1}{2}}(\ub)}{\Lambda^{\frac{1}{2}}(u^\prime)} L \phi\Big\|^2_{L^\infty (\Sigma^{+}_{\tau, u})} +\Big\|\frac{\Lambdab^{\frac{1}{2}}(\ub)}{\Lambda^{\frac{1}{2}}(u^\prime)} L^2 \phi\Big\|^2_{L^\infty (\Sigma^{+}_{\tau, u})} \right) \di \tau \nnb \\
\lesssim &{}  (I_{N+1}^2 + \delta M^4)  \sum_{i \leq 1} \iint_{\D^{+}_{t, u}} \frac{\Lambdab(\ub)}{\Lambda(u^\prime)} (|L \p^i_x \phi|^2 + |L^2 \p^i_x \phi|^2)  \sqrt{g} \di \tau \di x \nnb \\
\lesssim &{} \int^{u}_{-\infty} \frac{I_{N+1}^2 + \delta M^4}{\Lambda(u^\prime)} F^2_{N+1} (u^\prime, t) \di u^\prime. \label{esti-crucial-L}
\end{align}
We remark that the improvement \eqref{energy-bound-Eb-Fb} is crucial in the above estimates.
The rest are all of lower order,
\begin{align*}
& \iint_{\D^{+}_{t, u}} |\Lb \phi|^2 |L \phi_k \Lb \phi_k| \left( \Lambdab (\ub) |L^2 \phi L \phi| + |\Lambdab^\prime (\ub)| |L \phi|^2 \right) \sqrt{g} \di \tau \di x \\
 \lesssim{}&  \iint_{\D^{+}_{t, u}} \frac{M^2}{\Lambda(u)} |L \phi_k \Lb \phi_k| \delta^2 M^2 \sqrt{g} \di \tau \di x  \lesssim \delta^3 M^6,
\end{align*}
where the estimate in the final inequality is carried out in the same way as \eqref{terms-L-Lb-1},
and
\begin{align*}
& \iint_{\D^{+}_{t, u}} \left( |\Lb \phi|^2 |L \phi_k|^2 +  |L \phi|^2 |\Lb \phi_k|^2  \right)  \Lambdab (\ub) |\Lb L \phi L \phi|  \sqrt{g} \di \tau \di x  \\ \lesssim {} &  \iint_{\D^{+}_{t, u}} \left( \frac{M^2}{\Lambda(u^\prime)} |L \phi_k|^2 \delta^2 M^2 +  \delta^2 M^2 |\Lb \phi_k|^2  \frac{\delta M^2}{\Lambdab^{\frac{1}{2}}(\ub) \Lambda^{\frac{1}{2}}(u^\prime)}\right)  \sqrt{g} \di \tau \di x  \lesssim  \delta^3 M^6.
\end{align*}

For $\frac{1}{2\sqrt{g}}\TL(\sqrt{g} g^{\gamma\rho}) \p_{\gamma}\phi_k \p_{\rho}\phi_k$, after a similar calculation as before, we obtain,
\begin{align*}
& \frac{1}{2\sqrt{g}}\TL(\sqrt{g}g^{\gamma\rho}) \p_{\gamma}\phi_k \p_{\rho}\phi_k \\ 
={}& - \frac{1}{2g} ( \TL \Lb \phi \Lb \phi |L \phi_k|^2 + \TL L \phi L \phi |\Lb \phi_k|^2) + \frac{ \TL \p_\mu \phi \p^\mu \phi}{g^2} \p^\gamma \phi \p^\rho \phi \p_{\gamma}\phi_k \p_{\rho}\phi_k.
\end{align*}

Since $\TL = \Lambdab (\ub) L + \Lambdab (\ub) |L \phi|^2 \Lb$, and $\Lambdab (\ub) L \p \phi$ is always larger than $\Lambdab (\ub) |L \phi|^2 \Lb \p \phi$, it suffices to check the following terms.
\begin{align*}
& \iint_{\D^{+}_{t, u}} \Lambdab (\ub) |L\Lb \phi \Lb\phi| |L \phi_k|^2 \sqrt{g} \di \tau \di x  \\
= {}& \iint_{\D^{+}_{t, u}} \Lambdab (\ub) |\Lb L \phi \Lb\phi| |L \phi_k|^2 \sqrt{g} \di \tau \di x\\
\lesssim {}&  \iint_{\D^{+}_{t, u}} \frac{I_{N+1}^2 + \delta M^4}{\Lambda(u)} \Lambdab (\ub) |L \phi_k|^2 \sqrt{g} \di \tau \di x  \\
\lesssim {} & \int^{u}_{-\infty} \frac{I_{N+1}^2 + \delta M^4}{\Lambda(u)} F^2_{k+1} (u^\prime, t) \di u^\prime.
\end{align*}
And $$\iint_{\D^{+}_{t, u}} |\Lb \phi_k|^2 \Lambdab (\ub) |L^2 \phi L \phi| \sqrt{g} \di \tau \di x  \lesssim \int^{u}_{-\infty} \frac{I_{N+1}^2 + \delta M^4}{\Lambda(u^\prime)}  F^2_{N+1} (u^\prime, t) \di u^\prime$$ is already shown in \eqref{esti-crucial-L}.
The last one $\frac{ \TL \p_\mu \phi \p^\mu \phi }{g^2} \p^\gamma \phi \p^\rho \phi \p_{\gamma}\phi_k \p_{\rho}\phi_k$ is a lower order term obeying the estimate $$\iint_{\D^{+}_{t, u}} \frac{ |\TLb \p_\mu \phi \p^\mu \phi|}{g^2} |\p^\gamma \phi \p^\rho \phi \p_{\gamma}\phi_k \p_{\rho}\phi_k|  \sqrt{g} \di \tau \di x \lesssim \delta^3 M^6.$$

In the end, we arrive at the conclusion
\bel{eq-energy-estimate-TL}
E^2_{N+1}(u, t) + F^2_{N+1}(u, t)  \lesssim \delta^2 I^2_{N+1} + \delta^3 M^6 + \int^{u}_{-\infty} \frac{I_{N+1}^2 + \delta M^4}{\Lambda^{\frac{1}{2}}(u^\prime)} F^2_{N+1} (u^\prime, t) \di u^\prime,
\ee
which further implies
\bes
F^2_{N+1}(u, t)  \lesssim \delta^2 I^2_{N+1} + \delta^3 M^6 + \int^{u}_{-\infty} \frac{I_{N+1}^2 + \delta M^4}{\Lambda^{\frac{1}{2}}(u^\prime)} F^2_{N+1} (u^\prime, t) \di u^\prime.
\ees
The Gronwall's inequality leads to
\bes
F^2_{N+1} (u, t)  \lesssim \exp(I_{N+1}^2 + \delta M^4) (\delta^2 I^2_{N+1} + \delta^3 M^6),
\ees
Substituting it into \eqref{eq-energy-estimate-TL}, we achieve
\bel{Energy-Bound-E-F}
E^2_{N+1}(u, t) + F^2_{N+1}(u, t)  \lesssim  \delta^2 I^2_{N+1} + \delta^3 M^6.
\ee

\section{Initial data}\label{Sec:ID}
The aim for us in this section is to show the following main result with regard to the initial data.

Let us introduction a definition for convenience. For any function $f \in C^\infty(\mathbb R)$, we call $f(x) \in O_\gamma$, if
\bes
\int_{\mathbb R}(1+|x|)^{2+2\gamma}|f^{(k)} (x)|^2dx \lesssim 1, \quad \text{for all integer} \,\, k \geq 0,
\ees
recalling that $f^{(k)} (x) = \frac{\di^{k}}{\di x^{k}} f(x)$.
\begin{theorem}\label{thm3}
Suppose our data set satisfies the assumptions in Theorem \ref{main-theorem} and $\phi$ is the solution of Cauchy problem, then it follows that
\be\label{ID1}
L\phi_{k}|_{t=0} = \delta f_k(x),\quad \text{and}\quad \underline{L}\phi_{k}|_{t=0} = \fb_k(x),
\ee
where $f_{k}(x), \, \fb_{k}(x) \in O_\gamma$ are smooth functions depending only on $(F(x),G(x))$.
\end{theorem}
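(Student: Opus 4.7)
The approach is as follows. The base case $|k|=0$ is a direct reading of the data: $L\phi|_{t=0}=G+F'=\delta f$ and $\underline L\phi|_{t=0}=G-F'=\fb$, so $f_0=f$, $\fb_0=\fb$ lie in $O_\gamma$ by hypothesis.

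For $|k|\geq 1$, the core calculation is to express every $\p_t^c\p_x^d\phi|_{t=0}$ with $c+d\geq 1$ as an explicit rational function of $F^{(m)}(x)$ ($1\leq m\leq c+d$) and $G^{(m)}(x)$ ($0\leq m\leq c+d-1$). Solving \eqref{I2} for $\phi_{tt}$ gives $\phi_{tt}=H(\phi_t,\phi_x,\phi_{xx},\phi_{tx})$ with $H$ a smooth rational function whose denominator $1+\phi_x^2$ is bounded below by $1$; iterating this identity once for each order of $\p_t$ beyond the first reduces every $\p_t^c\p_x^d\phi|_{t=0}$ to such a form, and $F^{(0)}=F$ itself never appears since $H$ depends only on derivatives of $\phi$ of order $\geq 1$. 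Writing $\phi_k=\p_t^a\p_x^b\phi$ with $a+b=|k|$ then expresses both $L\phi_k|_{t=0}=\p_t^{a+1}\p_x^b\phi|_{t=0}+\p_t^a\p_x^{b+1}\phi|_{t=0}$ and $\underline L\phi_k|_{t=0}$ as smooth rational functions of $F^{(m)},G^{(m)}$ with $m\geq 1$; via $F'=(\delta f-\fb)/2$ and $G=(\delta f+\fb)/2$, these become polynomial combinations of $\delta f^{(r)}(x), \fb^{(r)}(x)$ multiplied by bounded rational coefficients.

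The crucial step is extracting the factor of $\delta$ in $L\phi_k|_{t=0}$. Observe that when $\delta=0$ the data reduces to $G=-F'$, and the right-travelling wave $\phi(t,x)=F(x-t)$ then furnishes an exact classical solution of \eqref{I2} matching the Cauchy data, as verified directly ($\phi_t+\phi_x\equiv 0$ annihilates every quasilinear term in the equation $(1+\phi_x^2)\phi_{tt}-2\phi_t\phi_x\phi_{tx}-(1-\phi_t^2)\phi_{xx}=0$). By local uniqueness for the quasilinear string equation, $\phi=F(x-t)$ is the unique smooth solution near $t=0$, so $L\phi\equiv 0$ identically there and hence $L\phi_k|_{t=0}=0$ for every $k$ when $\delta=0$. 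But $L\phi_k|_{t=0}$ is, by the preceding paragraph, a rational function in $\delta$ (with $f,\fb$ fixed) whose denominator is bounded below uniformly in $\delta$; its vanishing at $\delta=0$ forces the numerator, as a polynomial in $\delta$, to be divisible by $\delta$. We therefore define $f_k(x):=\delta^{-1}L\phi_k|_{t=0}$, a smooth function of $(F,G)$ alone.

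The weighted integrability $f_k,\fb_k\in O_\gamma$ follows because each is a polynomial combination of derivatives $f^{(r)}(x), \fb^{(r)}(x)$ with bounded-with-bounded-derivative coefficients, in which every monomial carries at least one $f^{(r)}$ or $\fb^{(r)}$ factor. The weighted $L^2$ norm of any $p$-th derivative of such a monomial is estimated by placing the weight $(1+|x|)^{1+\gamma}$ on a single chosen factor (controlled by $\sqrt I$ via the hypothesis) and bounding the remaining factors in $L^\infty$ using the 1D Sobolev embedding $H^1(\mathbb R)\hookrightarrow L^\infty(\mathbb R)$ applied to the same weighted assumption. The main obstacle is the $\delta$-extraction step: the simple-wave uniqueness argument, while transparent, invokes local uniqueness for the quasilinear Cauchy problem and must be set up carefully; a more self-contained alternative is to solve inductively in $j+l$ the explicit algebraic system \eqref{I2} imposes on $L^j\underline L^l\phi|_{t=0}$, observing the $\delta$-factor emerge from the null structure $L\underline L\phi=[(L\phi)^2\underline L^2\phi+(\underline L\phi)^2 L^2\phi]/[2(L\phi\underline L\phi-2)]$ once the identities $L^2\phi|_{t=0}=L\underline L\phi|_{t=0}+2\delta f'$ and $\underline L^2\phi|_{t=0}=L\underline L\phi|_{t=0}-2\fb'$ are used to eliminate the unknown second-order pieces.
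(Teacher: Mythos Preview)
Your proposal is correct, and your $\delta$-extraction via the simple-wave solution $\phi=F(x-t)$ is a genuinely different route from the paper's. The paper proceeds by direct induction on $|k|$: having shown $L\phi_n|_{t=0}\in\delta O_\gamma$ and $\underline L\phi_n|_{t=0}\in O_\gamma$ for all $n\le k$, it passes to $\phi_{k+1}$ by combining the trivial spatial increments $L\p_x\phi_k|_{t=0}=\delta f_k'$, $\underline L\p_x\phi_k|_{t=0}=\fb_k'$ with the explicit null-frame identity
\[
L\underline L\phi_k\big|_{t=0}=\frac{\delta N_k-2(\underline L\phi)^2 L\p_x\phi_k+2(L\phi)^2\underline L\p_x\phi_k}{4+(\underline L\phi)^2+(L\phi)^2-2L\phi\,\underline L\phi}\Big|_{t=0},
\]
obtained by substituting $L^2\phi_k=L\underline L\phi_k+2L\p_x\phi_k$ and $\underline L^2\phi_k=L\underline L\phi_k-2\underline L\p_x\phi_k$ into the differentiated equation and solving; the $\delta$ factor is then read off term by term. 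Your approach instead observes once and for all that when $\delta=0$ the data admit the exact solution $F(x-t)$, so every $L\phi_k|_{t=0}$ vanishes there and divisibility by $\delta$ in the universal rational expression follows. The paper's hand computation is self-contained and yields the explicit recursion; yours isolates the structural reason (the simple wave annihilates $L$) at the price of appealing to the exact solution. One small sharpening: you do not actually need local \emph{uniqueness} for the quasilinear Cauchy problem---since the traces $\p_t^c\p_x^d\phi|_{t=0}$ are already determined algebraically by the data and the equation, the mere \emph{existence} of the simple wave (which you verified directly) forces those traces to coincide with the ones computed from $F(x-t)$. The inductive alternative you sketch in your closing sentence is exactly the paper's argument.
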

\begin{proof}
We will prove the theorem by induction. When $k=0$, we have
\begin{align}
L\phi|_{t=0} & =(\phi_t+\phi_x)|_{t=0}=G(x)+F^{\prime}(x) = \delta f(x), \label{ID2} \\
\Lb \phi|_{t=0}&=(\phi_{t}-\phi_{x})|_{t=0}=G(x)-F^{\prime}(x) = \fb(x). \label{ID3}
\end{align}
Letting $f_0 (x)=f(x), \, \fb_0 (x) = \fb (x)$, we know that $f(x), \, \fb(x) \in O_\gamma$ and hence \eqref{ID1} holds true.

To proceed to higher order cases, it will be useful to work within the null frame $(L,\,\underline{L})$ (instead of the Cartesian one $(\p_t, \, \p_x)$). We rewrite the relativistic string equation as
\be\label{ID4}
2(2-L\phi\underline{L}\phi)L\underline{L}\phi+(\underline{L}\phi)^2L^2\phi+(L\phi)^2\underline{L}^2\phi=0.
\ee
And its higher order versions take the form of
\begin{align}\label{ID5}
&2(2-L\phi\underline{L}\phi)L\underline{L}\phi_k+(\underline{L}\phi)^2L^2\phi_k+(L\phi)^2\underline{L}^2\phi_k\nonumber\\
=-&\sum_{l=0}^{k-j}\sum_{j=0}^{k-1}C_{k-j}^{l}C_{k}^{j} \left(-2L\phi_l\underline{L}\phi_{k-j-l}L\underline{L}\phi_j
+2\underline{L}\phi_l\underline{L}\phi_{k-j-l}L^2\phi_j+2L\phi_lL\phi_{k-j-l}\underline{L}^2\phi_j\right),
\end{align}
where $C_{k-j}^{l}, \, C_{k}^{j}$ are some constants.

Suppose that \eqref{ID1} holds true for all $n\leq k$, that is,
\bes
L\phi_{n}|_{t=0}=\delta f_{n}(x) \in \delta O_\gamma, \quad \text{and}\quad \underline{L}\phi_{n}|_{t=0}=\fb_{n}(x) \in O_\gamma.
\ees
Then for $\phi_{k+1}$, we have
\begin{align}
L\partial_x\phi_{k}|_{t=0} & =\partial_{x}L\phi_{k}|_{t=0}=\delta  f^\prime_k(x) \in \delta O_\gamma, \label{ID6} \\
\underline{L}\partial_{x}\phi_{k}|_{t=0} & =\partial_{x}\underline{L}\phi_{k}|_{t=0}= \fb^\prime_{k}(x) \in O_\gamma. \label{ID7}
\end{align}
which shows that \eqref{ID1} holds for $L\partial_x\phi_{k}|_{t=0}$ and $\underline{L}\partial_{x}\phi_{k}|_{t=0}$.

Observe that the right hand side of \eqref{ID5} contains only lower order derivative terms, like $L\phi_{n}$ and $\underline{L}\phi_{n}$ for $n\leq k$, and each term contains at least one $L\phi_n$, which will afford a $\delta$ factor. We can substitute \eqref{ID1} for the $n\leq k$ case into the right hand side of \eqref{ID5}, so that
\be\label{ID8}
2(2-L\phi\underline{L}\phi)L\underline{L}\phi_k|_{t=0}+(\underline{L}\phi)^2L^2\phi_k|_{t=0}
+(L\phi)^2\underline{L}^2\phi_k|_{t=0}=\delta N_{k}(x) \in \delta O_\gamma,
\ee
where $N_{k}(x)$ is a smooth function depending only on the initial data. Noticing that
\begin{align}
L^2\phi_k|_{t=0} & =L(\partial_{t}-\partial_x+2\partial_x)\phi_k|_{t=0}=L\underline{L}\phi_k|_{t=0}+2L\partial_{x}\phi_k|_{t=0}, \label{ID9} \\
\underline{L}^2\phi_k|_{t=0} & =\underline{L}(\partial_{t}+\partial_x-2\partial_x)\phi_k|_{t=0}=L\underline{L}\phi_k|_{t=0}-2\underline{L}\partial_x\phi_k|_{t=0}, \label{ID10}
\end{align}
we substitute \eqref{ID6}-\eqref{ID7}, \eqref{ID9}-\eqref{ID10} into \eqref{ID8}, and obtain
\begin{align}\label{ID11}
L\underline{L}\phi_{k}|_{t=0}&=\frac{\delta N_{k}(x)-2(\underline{L}\phi)^2L\partial_{x}\phi_k+2(L\phi)^2\underline{L}\partial_{x}\phi_k}{4+(\underline{L}\phi)^2+(L\phi)^2-2L\phi\underline{L}\phi}\Big|_{t=0}\nonumber\\
&=\frac{\delta \left( N_{k}(x)-2 \fb^2_{0}(x) f^\prime_{k}(x)+2\delta f^2_{0}(x)  \fb^\prime_{k}(x) \right)}{4+\fb^2_{0}(x)-2\delta \fb_{0}(x)f_{0}(x)+\delta^2f^2_{0}(x)} \in \delta O_\gamma,
\end{align}
if $\delta$ is sufficiently small.
Consequently, we have
\begin{align}
L\partial_{t}\phi_k|_{t=0} & =L\underline{L}\phi_{k}|_{t=0}+L\partial_{x}\phi_{k}|_{t=0} \in \delta O_{\gamma}, \label{ID12} \\
\underline{L}\partial_{t}\phi_{k}|_{t=0} & =\underline{L}L\phi_{k}|_{t=0}-\underline{L}\partial_{x}\phi_k|_{t=0}\in O_{\gamma}. \label{ID13}
\end{align}

Combining \eqref{ID6}-\eqref{ID7} with \eqref{ID12}-\eqref{ID13}, we can see that \eqref{ID1} holds for $\phi_{k+1}$. We complete the proof of Theorem \ref{thm3}.
\end{proof}

\noindent{\Large {\bf Acknowledgements.}} J.W. is supported by the NSF of Fujian Province (Grant No. 2018J05010) and NSFC (Grant No. 11701482). C.W. is supported by the NSFC (Grant No. 12071435, 11871212), the NSF of Zhejiang Province (Grant No. LY20A010026) and the Fundamental Research Funds of Zhejiang Sci-Tech University (Grant No. 2020Q037).

\end{document}